\pgfplotsset{compat=1.15}
\definecolor{ffqqqq}{rgb}{1,0,0}
\newtheorem{thm}{Theorem}[section]
\newtheorem{cor}[thm]{Corollary}
\newtheorem{lem}[thm]{Lemma}
\newtheorem{que}[thm]{Question}
\theoremstyle{definition}
\newtheorem{defn}[thm]{Definition}
\newtheorem{rem}[thm]{Remark}
\newcommand{\N}{\mathbb{N}}
\newcommand{\G}{\mathcal{G}}
\newcommand{\Z}{\mathbb{Z}}
\newcommand{\R}{\mathbb{R}}
\newcommand{\F}{\mathcal{F}}
\renewcommand{\H}{\mathcal{H}}
\newcommand{\eps}{\varepsilon}
\newcommand{\cl}[1]{\overline{#1}}
\newcommand{\bd}[1]{\partial (#1)}
\newcommand{\Int}[1]{ \textrm{Int} (#1)}
\newcommand{\End}{\mathsf{End}}
\newcommand{\diam}{\text{diam\,}}
\newcommand{\mesh}{\text{mesh\,}}
\newtheorem*{claim*}{Claim}
\numberwithin{equation}{section}
	\title{Every nondegenerate Peano continuum admits a pure mixing selfmap}
	\author[K.\ Karasova]{Klara Karasova}
	\author[M.\ Kowalewski]{Micha\l{}~Kowalewski}
	\author[P.\ Oprocha]{Piotr Oprocha}
\address[K. Karasova]{Charles University, Faculty of Mathematics and Physics
}
\email{karasova@karlin.mff.cuni.cz}
\address[M. Kowalewski]{AGH University of Krakow, Faculty of Applied Mathematics,
al.\ Mickiewicza 30, 30-059 Krak\'ow, Poland.}
\email{kowalewski@agh.edu.pl}
 \address[P.\ Oprocha]{
Centre of Excellence IT4Innovations - Institute for Research and Applications of Fuzzy Modeling, University of Ostrava, 30. dubna 22, 701 03 Ostrava 1, Czech Republic}
\email{piotr.oprocha@osu.cz}
\begin{document}
\begin{abstract}
We prove that every Peano continuum (a space that is a continuous image of $[0,1]$) admits a topologically mixing but not exact map. The constructed map has a dense set of periodic points. 
\end{abstract}

\keywords{Peano continuum, mixing, exact, Devaney chaos,  local cut point}
\subjclass[2020]{Primary: 37B45, Secondary: 37B05, 37B40}

\maketitle

\section{Introduction}
In the past, several attempts have been made to formalize mixing in a mathematically rigorous way. Topological mixing represents a natural expectation connected with this process: after some time, the image of any {nonempty} open set $U$ starts to intersect with any other {nonempty} open set $V$, and this nonempty intersection never disappears. It is a stronger property than transitivity but often weaker than topological exactness (every open set eventually covers the whole space under iteration) or the specification property (for more details, see survey \cite{KLO}). Transitivity (or stronger mixing properties) can also be used to formalize the mathematical understanding of chaos. This idea originated from the classical book of Devaney \cite{Dev} and was later adopted and extended by various authors (cf. \cite{KMis} and references therein). It was also observed that, in various situations, dense periodicity is a direct consequence of transitivity (see \cite{DirSno}).
This is perhaps the simplest example of how topological structure can influence dynamical complexity, motivating  the question of the existence of maps with prescribed chaotic behavior in a specified class of spaces.

Classical results already highlight the problem. For example, every weakly mixing map of the interval is necessarily mixing (see \cite{Ruette}). The Warsaw circle illustrates another restriction: it admits a mixing map yet, it is easy to see that no proper subset covers the entire space in finite time (hence no exact map exists, see e.g. \cite{sivak}). 
Properly placed ``obstacles'' can significantly determine the dynamics or limit the set of possible homeomorphisms or surjections (see \cite{DowSno}, cf. \cite{Bor}). It is also reflected in studies on the relationships between the values of topological entropy, the structure of the space, and the richness of the dynamics. It was first observed in \cite{KH} that the entropy of mixing circle maps can be arbitrarily small, while in mixing maps which are not exact (so-called \textit{pure mixing} maps) this infimum is bounded away from zero (and this phenomenon remains valid on topological graphs \cite{HKO}). Recently, \v{S}pitalsk\'{y} noted in \cite{Vlad} that this infimum is zero in exact maps on dendrites, provided that no subtree of the dendrite contains all {free} arcs  {in the dendrite} (see \cite{KOT}). It is also worth noting that some spaces, while allowing mixing maps, make the admissible values of entropy very rigid, with the Knaster continuum serving as a particular example of this case (e.g., see \cite{BruSti}).
It is also clear that several other obstacles may arise, for example, spaces such as $[0,1]$ allow topological mixing, but homeomorphisms have only simple dynamics. It also intuitively seems that mixing, but not exact maps, may require quite complicated dynamics, as is evident in interval maps already. It is also worth mentioning the recent paper of Illanes and Rito \cite{MixingDendrites}, where the authors construct a strongly mixing map (every point of the space is eventually covered by any open set) that is not exact (cf. \cite{HKO}).
{Recently, in \cite{karasova2025chaospeanocontinua}
it was shown that every continuum that is a continuous image of the interval $[0,1]$ (the so-called Peano continuum) admits an exact map.} This generalizes the finite-dimensional result of  \cite{AgronskyCeder}. Naturally, this raises the question of whether mixing may exist on every Peano continuum without exactness.
The answer is evidently positive in some cases, since there are well known examples of homeomorphism on the torus or the previously mentioned examples on the interval.
The main result of this paper is the following:
\begin{thm}\label{thm:pure}
    Every nondegenerate Peano continuum admits a pure mixing (i.e., mixing but not exact) surjection with a dense set of periodic points.
\end{thm}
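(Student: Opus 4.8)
The plan is to reduce non-exactness to one cheap local condition and then to build a map that is ``expanding'' in a global inflation/Markov sense, so that mixing, Devaney chaos, and that local condition can coexist.

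\emph{Reducing non-exactness.} I would aim to construct $f$ with a fixed point $p$ whose only preimage is itself, $f^{-1}(\{p\})=\{p\}$. In any nondegenerate continuum $\{p\}$ is closed and nowhere dense, so $U_0:=X\setminus\{p\}$ is open and dense; and $f^{-1}(\{p\})=\{p\}$ forces $f(U_0)\subseteq U_0$, whence $f^n(V)\subseteq U_0\subsetneq X$ for every $n$ and every open $V\subseteq U_0$. Thus such an $f$ is automatically not exact, and it suffices to produce a topologically mixing surjection $f\colon X\to X$ with a dense set of periodic points admitting such a fixed point. (Any proper closed nowhere dense $C$ with $f^{-1}(C)\subseteq C$ would do; the singleton is the cheapest choice and is always available, which is what makes the statement uniform over all Peano continua.)

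\emph{The construction and its verification.} Fix a convex metric on $X$ (Bing--Moise), so that $X$ is geodesic and small balls about $p$ are connected. I would build $f$ in two coupled regimes. Near $p$: on a small ball $\overline{B(p,\delta)}$ let $f$ multiply the distance to $p$ by a fixed factor larger than $1$ while ``mixing'' within the metric level sets $\{x:d(x,p)=t\}$, using a radial profile $\psi$ with $\psi(0)=0$, $\psi>0$ on $(0,\delta]$ and $\psi(\delta)>\delta$; this makes $p$ the unique point mapped to $p$ and guarantees that every forward orbit eventually leaves $\overline{B(p,\delta)}$. Away from $p$: using that $X$ is Peano, take a refining sequence of finite covers of $X$ by small Peano subcontinua and build $f$ in the spirit of the Karasov\'a--Vejnar exact construction, so that $f$ carries each piece of a fixed cover across a block of pieces whose iterates eventually meet every piece; glue the two regimes continuously along $\partial B(p,\delta)$, keep $f$ surjective, and keep $p\notin f(X\setminus\{p\})$ so that $f^{-1}(\{p\})=\{p\}$ survives. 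Mixing then follows: an open set meeting $X\setminus\overline{B(p,\delta)}$ has iterates that spread over all pieces and so eventually, and forever after, meet any target; an open set inside $B(p,\delta)$ first escapes the ball by the radial expansion and reduces to the previous case, so $f^n(V)\cap W\neq\emptyset$ for all large $n$. Density of periodic points comes for free from a standard covering/fixed-point argument applied to the ``each piece maps across a block that eventually covers it'' property, which produces a periodic point inside every piece of every cover in the refining sequence.

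\emph{The main obstacle.} The genuinely hard part is carrying this out uniformly: near an arbitrary point the local structure of $X$ can be wild (no free arcs, infinite-dimensional, non-homogeneous --- the Menger curve, the Sierpi\'nski carpet, the Hilbert cube, or worse), so both the ``radial expansion near $p$'' map and the ``spreading'' map must be built abstractly from the convex metric and from Peano/Hahn--Mazurkiewicz data rather than from coordinates, all while preserving continuity across the gluing, surjectivity, the rigid equality $f^{-1}(\{p\})=\{p\}$, and density of periodic points. For topological graphs this is classical (cf.\ \cite{HKO}); the new content is pushing it through with no local model for $X$, which is exactly where the Peano-continuum machinery (convex metrics, partitions into small Peano subcontinua, Hahn--Mazurkiewicz surjections) carries the load.
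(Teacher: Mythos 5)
Your reduction of non-exactness to the condition $f^{-1}(\{p\})=\{p\}$ is exactly the reduction the paper uses, but the claim that the singleton is ``always available'' hides the first genuine gap: the point $p$ cannot be arbitrary. If $p$ is a local cut point the whole scheme can fail --- already on $X=[0,1]$ with $p=1/2$, any continuous $f$ with $f^{-1}(\{1/2\})=\{1/2\}$ maps each of the connected sets $[0,1/2)$ and $(1/2,1]$ entirely into one of them, so the two halves are either permuted or one is forward invariant, and either way $f$ is not mixing. The paper spends all of Section~3 on precisely this issue: it proves (Lemma~\ref{existencenonloc}) that every nondegenerate Peano continuum contains either a point that is not a local cut point, or a local cut point of Menger order $2$ that is not a cut point; in the second case (e.g.\ the circle, where \emph{every} point is a local cut point of order $2$) it ``cuts'' the space at that point (Lemma~\ref{lifting}), runs the construction on the resulting continuum, and pushes the map down. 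Your proposal contains no mechanism for selecting $p$ and no fallback for spaces in which every point is a local cut point, so it does not go through as stated.

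The second gap is the one you yourself flag as ``the main obstacle'' and then leave unresolved: the radial-expansion model near $p$ does not exist in a general Peano continuum. The level sets $\{x: d(x,p)=t\}$ of a convex metric need not be homeomorphic to one another for different $t$ (branch points, changing local dimension, infinitely many components), so there is no continuous map ``multiplying the distance to $p$'' even locally, and gluing along $\partial B(p,\delta)$ has no meaning without it. The paper avoids any local model at all: it factors the map as $h=g\circ f$ with $f\colon X\to[0,1]$ satisfying $f^{-1}(0)=\{x_1\}$ and $f^{-1}(1)=\{x_2\}$ (Lemma~\ref{projections}), and $g\colon[0,1]\to X$ built inductively via Theorem~\ref{nadler:8.19} so that $g^{-1}(x_i)$ is an endpoint of $[0,1]$; the interval serves as the global coordinate your sketch is missing, the non-local-cut-point hypothesis (through Lemma~\ref{noncutproperty}) is what makes the complements of shrinking neighborhoods of $x_i$ connected Peano continua so that $g$ can be defined, and the requirement that both even and odd iterates of each piece cover $\bigcup\F_i$ is what upgrades eventual covering to genuine mixing (your argument only shows that images eventually meet every target, not that they do so for all sufficiently large $n$). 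So the proposal correctly isolates the right notion of non-exactness but is missing both of the ideas that make the theorem true for all Peano continua.
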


As we show, our construction, unfortunately, usually leads to large entropy (see Remark~\ref{rem:pure}; cf. Corollary \ref{cor:pure}) and, by the  methods involved, the constructed map is never invertible. It would be good to know what the infimum of topological entropy is on a given Peano continuum, or if such a continuum admits a mixing homeomorphism, but we do not have any answers to either of these questions. It will need further research and other methodologies.

\section{Preliminaries}
 We denote by $\Z$, $\N$, and $\N_0$ the sets of integers, positive integers, and nonnegative integers, respectively.
By a \textit{continuum, }we mean a compact, connected, metrizable space.  For a metrizable space $X$, we always fix a metric $d_X$ compatible with the topology of $X$. If the space $X$ is fixed and there is no ambiguity, we will simply write $d$ for any choice of a compatible metric. For a set $A \subset X$, we denote its interior by $\Int A$, its closure by $\overline{A}$, and its boundary by $\partial A$. For a metric space $(X,d)$ and  any point $x \in X$, we denote the ${\eps}$-ball centered in $x$ by $B(x,\varepsilon):=\{y; d(x,y)<\varepsilon \}$. For a set $U \subset X$, we write $\mathcal{N}(U,{\eps})=\bigcup_{x \in U}B(x,{\eps})$.  

We say that $x \in X$ is an endpoint of a continuum $X$ if arbitrarily small neighborhoods of $x$ have a boundary consisting of one point. We denote $\End(X)$ for the set of endpoints of the continuum $X$. We write $\diam A=\sup_{x,y \in A} d(x,y)$ for the diameter of the {nonempty} set $A \subset X$. For any   finite cover $\mathcal{F}$ of a continuum $X$, we write $\mesh \mathcal{F}=\max_{F \in \mathcal{F}} \diam F $

\subsection{Dynamical systems}
A \textit{dynamical system} is a pair $(X,T)$ consisting of a compact metrizable space $X$ and a continuous map $T\colon X\to X$. 
We denote $T^0=id_{X}$ and $T^{n+1}=T \circ T^n$ for $n \geq 0$. We say that $x \in X$ is a \textit{fixed point (of T)} if $T(x)=x$, and we say that $x$ is a  \textit{periodic point (of T)} if $T^n(x)={x}$ for some $n >0$. The dynamical system $(X,T)$ (or just map $T$) is said to be \textit{transitive} if, for any nonempty open subsets $U,V \subset X$, there exists a natural number $n\in \mathbb{N}$ such that  $f^n(U) \cap V \neq \emptyset$. If a transitive system has a dense set of periodic points, we call it \textit{Devaney chaotic}. We say that a dynamical system $(X,T)$ (or map $T$) is \textit{mixing} if, for any two nonempty open subsets $U,V \subset X$, there exists a natural number $n_0 \in \mathbb{N}$ such that for every $n \geq n_0$, we have $f^n(U) \cap V \neq \emptyset$. Equivalently, the system is mixing if, for any nonempty open set $U \subset X$, the sequence of sets $f^n(U)$ converges to $X$ in the Hausdorff metric. We say that a dynamical system $(X,T)$ (or map $T$) is \textit{ exact} or \textit{locally eventually onto (leo) }if, for any nonempty open set $U$, there is a natural number $n \in \mathbb{N}$ such that $f^n(U)=X$. It is easy to see that every exact map is mixing and that every mixing map is transitive. 

\begin{defn}
We say that a dynamical system $(X,T)$ (or the map $T$) is \textit{pure mixing} if it is mixing but not exact.    
\end{defn}

Let $(X,T)$ be a dynamical system, and let ${\eps}>0$. We say that a set $E \subset X$ is $(n,{\eps})${-}separated if, for any two distinct $x,y$, the inequality $\max_{0 \le i \le n} d(f^i(x),f^i(y))\geq {\eps}$ holds. Denote $s(n,\eps)$ to be the maximal cardinality of a $(n,{\eps})${-}separated set. We call the number $h_{top}(T)=\lim\limits_{\eps \to 0}\limsup\limits_{n \to \infty} \frac1n s(n,{\eps})$ the \textit{topological entropy} of the map $T$ (or of the system $(X,T)$). 

\subsection{Peano continua}
    For a topological space  $X$ and a point  $x\in X$, the \textit{Menger order} of $x$ in $X$ is the least natural number $n$ such that there is a basis of the neighborhood system of $x$ formed by sets whose boundaries are of size at most $n$, if any such natural number exists, and $\infty$ otherwise.

    A metric space $(X,d)$ is called a \textit{Peano space }provided that for each $p \in X$ and each neighborhood $N$ of $p${,} there is a connected open subset $U$ of $X$ such that $p \in U \subset N.$ A \textit{Peano continuum} is a Peano space that is a continuum. 

    A nonempty subset $Y$ of a metric space $(X,d)$ is said to have \textit{the property $S$} provided that for each $\eps > 0,$ there is $\F$ a finite cover of $Y$ formed by connected sets such that $\mesh\F<\varepsilon$.

\begin{thm}\label{finecovers}
    Let $X$ be a Peano continuum and $\varepsilon>0$. There is a finite cover $\F$  of $X$ formed by Peano continua such that $F=\overline{\Int F}$ for every $F\in\F$ and $\mesh\F<\varepsilon$.
\end{thm}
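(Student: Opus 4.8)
The plan is to realize $\F$ as a family of small closed metric balls, taken with respect to a conveniently chosen metric rather than $d_X$ itself. First I would invoke the classical theorem of Bing and Moise that every Peano continuum admits a compatible \emph{convex} metric $\rho$, i.e.\ one in which any two points $a,b$ possess a midpoint $c$ with $\rho(a,c)=\rho(c,b)=\tfrac12\rho(a,b)$; for a compact space this forces $(X,\rho)$ to be geodesic, so closed $\rho$-balls are connected and coincide with the closures of the corresponding open balls. Because $\rho$ and $d_X$ both induce the topology of the compact space $X$, the identity map is uniformly continuous in both directions, so I can fix $\varepsilon'>0$ such that every subset of $X$ of $\rho$-diameter below $\varepsilon'$ has $d_X$-diameter below $\varepsilon$. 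It then suffices to exhibit a cover of the required type whose $\rho$-mesh is below $\varepsilon'$.

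For $x\in X$ I would set $P_x:=\{y\in X:\rho(x,y)\le\varepsilon'/3\}$. Since $\rho$ is geodesic, $P_x$ is a connected compact set equal to $\overline{B_\rho(x,\varepsilon'/3)}$; in particular $x\in B_\rho(x,\varepsilon'/3)\subseteq\Int{P_x}$, and $P_x=\overline{\Int{P_x}}$ because $P_x$ is the closure of an open set. Its $\rho$-diameter is at most $2\varepsilon'/3<\varepsilon'$, hence its $d_X$-diameter is below $\varepsilon$. The one substantive claim is that $P_x$ is locally connected, so that it is a Peano continuum. Granting this, $\{B_\rho(x,\varepsilon'/3):x\in X\}$ is an open cover of the compact space $X$; taking a finite subcover indexed by $x_1,\dots,x_k$ and putting $\F:=\{P_{x_1},\dots,P_{x_k}\}$ yields a finite cover of $X$ by regular closed Peano continua with $\mesh\F<\varepsilon$ (the degenerate case, $X$ a point, being trivial).

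The hard part will be precisely the local connectedness of a closed ball $P_x=\overline{B_\rho(x,s)}$; this is the only place where more than bookkeeping is needed, and it is the reason for passing to a convex metric in the first place. At a point of $P_x$ interior to the ball it is immediate, since small $\rho$-balls are geodesically (hence path-) connected. At a point $p$ on the bounding sphere $\{y:\rho(x,y)=s\}$ I would argue by ``retracting slightly toward the centre'': given $y\in P_x$ close to $p$, slide $y$ a little way inward along a geodesic from $y$ to $x$ to a point $y'$ with $\rho(x,y')$ just below $s$ and $\rho(y,y')$ small; do the same for $p$, obtaining $p'$; join $y'$ to $p'$ by a connected subset of $X$ of small diameter furnished by the uniform local connectedness of the Peano continuum $X$; and check that the geodesic arcs from $y$ to $y'$ and from $p$ to $p'$, as well as this connecting set, all lie in $P_x$, since every point used stays within distance $s$ of $x$. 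The union of the resulting connected sets, over all $y$ near $p$, is then a connected neighbourhood of $p$ contained in any prescribed one. It is worth stressing why a more direct route fails: one cannot merely cover $X$ by small connected open sets and take their closures, because the closure of a connected open subset of a Peano continuum need not be locally connected --- a ``comb'' whose teeth accumulate along the boundary already fails --- so the members of $\F$ must be built to be regular closed and locally connected by design, which is exactly what a convex metric makes possible.
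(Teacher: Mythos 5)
Your overall strategy (Bing--Moise convex metric, closed balls as the cover elements) is genuinely different from the paper's, which simply quotes the property-$S$ machinery: $X$ has property $S$, so by \cite[8.9]{nadler} it admits a finite cover by open property-$S$ sets of small mesh, and the closures of these are regular closed Peano continua by \cite[8.5]{nadler} and \cite[8.4]{nadler}. Your reduction is fine as far as it goes --- in a compact convex (hence geodesic) metric the closed ball is a continuum equal to $\overline{B_\rho(x,r)}$, and the mesh and regular-closedness bookkeeping is correct --- but the one step you yourself single out as substantive, local connectedness of $P_x=\overline{B_\rho(x,s)}$ at a point $p$ of the sphere, has a genuine gap as written. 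After sliding $y$ and $p$ inward by $\delta$ to points $y'$ and $p'$ with $\rho(x,y'),\rho(x,p')\le s-\delta$, any connected set joining $y'$ to $p'$ has diameter at least $\rho(y',p')$, and a priori you only have $\rho(y',p')\le 2\delta+\rho(y,p)$; since geodesics from $x$ to the nearby points $y$ and $p$ may diverge immediately (they need not even be unique), $\rho(y',p')$ really can be of order $2\delta$. On the other hand, for the connecting set to stay inside $P_x$ its diameter must be at most $\delta$, because its points lie within that diameter of $p'$, which sits only at depth $\delta$ inside the sphere. So the connector would need diameter simultaneously about $2\delta$ and at most $\delta$: the quantities cannot be made to cooperate, and your final ``check that every point used stays within distance $s$ of $x$'' fails precisely for the connecting set.

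The claim itself is true, but proving it needs one more idea. Argue by contradiction with a sequence $y_k\to p$ in $P_x$ that cannot be joined to $p$ inside $P_x$ by connected sets of diameter less than some $\eps$, and use Arzel\`a--Ascoli to pass to a subsequence along which chosen geodesics from $x$ to $y_k$ converge uniformly to a geodesic from $x$ to $p$. Then the points $y_k'$ obtained by backing up distance $\mu:=\eps/4$ along these geodesics converge to the corresponding point $p'$ on the limit geodesic, so eventually $\rho(y_k',p')$ is below the uniform-local-connectedness modulus associated with diameter $\mu$; the resulting connector lies within $\mu$ of $p'$, hence inside $\overline{B_\rho(x,\rho(x,p))}\subseteq P_x$, and together with the two geodesic tails it gives a connected subset of $P_x$ of diameter at most $3\mu<\eps$ containing $y_k$ and $p$ --- the desired contradiction. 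With that repair your proof works, at the cost of invoking the deep Bing--Moise theorem where the paper needs only Nadler's elementary property-$S$ lemmas.
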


\begin{proof}
    It is easily seen that $X$ has the property S, hence, by \cite[8.9]{nadler}, there is $\G$ a finite cover of $X$ formed by open sets with the property S satisfying $\mesh\G<\varepsilon$. Let $\F:=\{\cl{G};\,G\in\G\}$, clearly $\F$ is a finite cover of $X$ by continua, $\mesh\F=\mesh\G<\varepsilon$ and every member $F\in F$ satisfies $F=\overline{\Int F}$. By \cite[8.5]{nadler}, each member of $\F$ has the property S and thus is a Peano continuum by \cite[8.4]{nadler}.
\end{proof}

We will need the following important result {\cite[Theorem~8.19]{nadler}}:
\begin{thm}\label{nadler:8.19}
    For every nondegenerate continuum $X$, every Peano continuum $Y$, distinct points $x_1,\dots,x_n\in X$, and any points $y_1,\dots,y_n\in Y$, there is a continuous surjection $f\colon X\to Y$ such that $f(x_i)=y_i$ for every $1\leq i\leq n$.
\end{thm}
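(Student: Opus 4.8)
The plan is to factor the desired map through the unit interval, writing $f=\psi\circ g$ with continuous surjections $g\colon X\to[0,1]$ and $\psi\colon[0,1]\to Y$, and to split the two requirements between the factors. Concretely, I would fix auxiliary parameters $t_i:=i/(n+1)\in(0,1)$ for $1\le i\le n$, arrange $g(x_i)=t_i$ on the domain side and $\psi(t_i)=y_i$ on the target side, and then compose: this yields $f(x_i)=\psi(g(x_i))=\psi(t_i)=y_i$, while surjectivity of $f$ is immediate, since $\psi(g(X))=\psi([0,1])=Y$. The value of this factorization is that the two constraints -- prescribing finitely many values on the domain, and covering all of the target -- are then handled by different standard tools, and in particular only the target side needs to know anything about the topology of $Y$.

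For the domain factor $g$, the key observation is that the value constraints and surjectivity onto $[0,1]$ can be arranged at once by a single application of the Tietze extension theorem. Since $X$ is nondegenerate it is infinite, so I can pick two further points $p,q\in X$, distinct from each other and from $x_1,\dots,x_n$. The finite set $A:=\{x_1,\dots,x_n,p,q\}$ is closed in the metrizable, hence normal, space $X$, and the assignment $x_i\mapsto t_i$, $p\mapsto 0$, $q\mapsto 1$ is a continuous map $A\to[0,1]$; by Tietze it extends to a continuous $g\colon X\to[0,1]$. Because $X$ is connected, $g(X)$ is a subinterval of $[0,1]$, and since it contains $g(p)=0$ and $g(q)=1$ it equals $[0,1]$. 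Thus $g$ is the required surjection with $g(x_i)=t_i$.

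For the target factor $\psi$, I would use that $Y$, being a Peano continuum, is both arcwise connected and a continuous image of $[0,1]$ (Hahn--Mazurkiewicz). I would build $\psi$ piecewise along $0<t_1<\dots<t_n<1$: set $\psi$ equal to the constant $y_1$ on $[0,t_1]$ and the constant $y_n$ on $[t_n,1]$; on $[t_1,t_2]$ place a continuous surjection of $[t_1,t_2]$ onto $Y$ starting at $y_1$ and ending at $y_2$, obtained by concatenating a space-filling map furnished by Hahn--Mazurkiewicz with two connecting paths (using arcwise connectivity) and reparametrizing; and on each remaining $[t_i,t_{i+1}]$ place a path from $y_i$ to $y_{i+1}$. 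The pieces agree at shared endpoints, so $\psi$ is continuous; it is surjective because of the space-filling piece on $[t_1,t_2]$; and by construction $\psi(t_i)=y_i$.

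I expect the main work to lie in the construction of $\psi$, and specifically in producing a continuous surjection of a closed interval onto $Y$ with two prescribed endpoints: this is where Hahn--Mazurkiewicz and the arcwise connectivity of Peano continua enter, and some care is needed to concatenate the space-filling map with the connecting paths and reparametrize without breaking continuity or surjectivity. The domain side, by contrast, is disposed of cleanly by the Tietze-plus-connectedness trick, which is the conceptual crux: it decouples the finite interpolation data from the covering requirement, letting surjectivity be secured entirely on the target side.
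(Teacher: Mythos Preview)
The paper does not prove this statement; it is quoted from Nadler's textbook as \cite[Theorem~8.19]{nadler} and used as a black box (chiefly inside the construction of Lemma~\ref{construction}). So there is no in-paper proof to compare against.

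Your argument is correct and self-contained. The factorization $f=\psi\circ g$ through $[0,1]$ is the natural route, and both halves work as you describe: Tietze plus connectedness of $X$ gives the surjective $g\colon X\to[0,1]$ with $g(x_i)=t_i$ (the extra points $p,q$ exist because a nondegenerate continuum is uncountable), and Hahn--Mazurkiewicz together with path-connectedness of $Y$ (itself an immediate consequence of Hahn--Mazurkiewicz) gives the surjective $\psi$ with $\psi(t_i)=y_i$. One small wrinkle: as written, your construction of $\psi$ places the space-filling piece on $[t_1,t_2]$, which tacitly assumes $n\ge 2$; for $n=1$ simply place it on $[0,t_1]$ or $[t_1,1]$ instead. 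With that trivial patch the proof is complete.
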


By definition, every point in a Peano continuum has an arbitrarily small connected neighborhood. This immediately leads to the following.

\begin{rem}\label{opencomponents}
    Connected components of open sets are open in every Peano continuum. 
\end{rem}

In our constructions of maps on Peano continua, we will strongly rely on the following extension theorem.

\begin{thm}[Tietze Extension Theorem]\label{tietze}
    Let $X$ be a normal topological space and $A\subset X$ a closed set. Assume that $f:A\to [0,1]$ (alternatively, $f:A\to \R$) is continuous. Then $f$ can be continuously extended over $X$, that is, there exists a continuous function $\bar{f}:X\to [0,1]$ (alternatively, $\bar{f}:X\to \R$) satisfying $\bar{f}(x)=f(x)$ for every $x\in A$.
\end{thm}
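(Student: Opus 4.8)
The plan is to reduce everything to Urysohn's Lemma, which is the essential consequence of normality: for any two disjoint closed subsets $C,D$ of a normal space $X$ there is a continuous $g\colon X\to[0,1]$ with $g\equiv 0$ on $C$ and $g\equiv 1$ on $D$. I would first treat the bounded case $f\colon A\to[-1,1]$ (equivalent to the stated $[0,1]$ case after an affine rescaling) and then deduce the unbounded case $f\colon A\to\R$.

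The core is an approximation lemma: given any continuous $\varphi\colon A\to[-c,c]$, there is a continuous $h\colon X\to[-c/3,c/3]$ with $|\varphi(x)-h(x)|\le 2c/3$ for all $x\in A$. To build $h$, set $C=\{x\in A:\varphi(x)\le -c/3\}$ and $D=\{x\in A:\varphi(x)\ge c/3\}$; these are disjoint and closed in $A$, hence closed in $X$ because $A$ is closed. Urysohn's Lemma, composed with the affine map sending $[0,1]$ onto $[-c/3,c/3]$, yields the desired $h$, and a short case check on $C$, $D$, and $A\setminus(C\cup D)$ gives the bound $|\varphi-h|\le 2c/3$.

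Iterating produces the extension. Starting from $f_0:=f$ with $c_0=1$, I would apply the lemma to get $h_1\colon X\to[-\tfrac13,\tfrac13]$ with $|f-h_1|\le 2/3$ on $A$; then apply it to the new error $f-h_1|_A\colon A\to[-\tfrac23,\tfrac23]$, and so on, obtaining $h_n\colon X\to[-\tfrac13(\tfrac23)^{n-1},\tfrac13(\tfrac23)^{n-1}]$ with $|f-\sum_{k\le n}h_k|\le (2/3)^n$ on $A$. Since $\|h_n\|_\infty\le\tfrac13(\tfrac23)^{n-1}$, the series $\bar f:=\sum_{n\ge 1}h_n$ converges uniformly by the Weierstrass $M$-test, so $\bar f$ is continuous; summing the geometric series gives $|\bar f|\le 1$, and the error estimate forces $\bar f|_A=f$. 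This settles the bounded case.

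For the unbounded case I would conjugate by a homeomorphism $\sigma\colon\R\to(-1,1)$, say $\sigma(t)=t/(1+|t|)$, extend $\sigma\circ f\colon A\to(-1,1)$ to some $g\colon X\to[-1,1]$ by the bounded case, and then correct the fact that $g$ may attain $\pm 1$ outside $A$. The obstruction set $Z:=g^{-1}(\{-1,1\})$ is closed and disjoint from $A$, so Urysohn's Lemma provides $\psi\colon X\to[0,1]$ with $\psi\equiv 0$ on $Z$ and $\psi\equiv 1$ on $A$; then $\psi\cdot g$ maps into $(-1,1)$, still agrees with $\sigma\circ f$ on $A$, and $\bar f:=\sigma^{-1}\circ(\psi g)$ is the required real extension. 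The only genuinely delicate point is this last step, namely guaranteeing that the extended function stays inside the open interval on which $\sigma^{-1}$ is defined; everything else is the routine telescoping estimate above.
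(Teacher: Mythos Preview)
Your argument is the standard, correct proof of the Tietze Extension Theorem via Urysohn's Lemma and the usual $\tfrac13/\tfrac23$ geometric iteration; the treatment of the unbounded case by conjugating through a homeomorphism onto $(-1,1)$ and then killing the preimage of $\{\pm1\}$ with an auxiliary Urysohn function is also correct and carefully handled.

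There is nothing to compare against, however: the paper does not prove this theorem at all. It merely states it as a classical background result (it is used later, e.g.\ in the proof of Lemma~\ref{projections}, as a black box). So your proof is not an alternative approach to the paper's---it simply supplies a proof where the paper offers none. If anything, for the purposes of this paper a one-line citation to a standard reference would suffice; your detailed argument is more than is needed here, but it is entirely sound.
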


\section{Cut points in Peano continua}

In what follows, we will need a good understanding of the structure of cut points of Peano continua and their influence on other aspects of the space. {Recall that a point $x \in X$ of a connected set $X$ is called a \textit{cut point} of $X$ if $X \setminus \{x\}$ is not connected. We say that a point is a \textit{non-cut} point if it is not a cut point. We call $x \in X$ a \textit{local cut point}, if there is some connected neighborhood $U \subset X$ such that $x$ is a cut point of $U$.}

\begin{lem}\label{densecutpoints}
    Let $X$ be a nondegenerate Peano continuum such that the set of all cut points of $X$ is dense in $X$. Then $\End(X)\ne\emptyset$.
\end{lem}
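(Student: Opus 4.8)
The plan is to derive this from Whyburn's classical cyclic element theory. Recall that in a nondegenerate Peano continuum the \emph{true cyclic elements} (the maximal subsets, with more than one point, no point of which separates them) together with the singletons of the \emph{non-cyclic points} (the points lying on no simple closed curve, each of which is a cut point or an end point) form an upper semicontinuous decomposition whose quotient is a dendrite $D$, the \emph{cyclic skeleton}. Since $X$ is nondegenerate, density of cut points gives that $X$ has at least one cut point, so $X$ is not a single true cyclic element and $D$ is a nondegenerate dendrite; hence $D$ has an end point $\eta$.

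The end point $\eta$ is the image of a set $Z\subseteq X$ --- either a true cyclic element or a single non-cyclic point --- that is ``terminal'': the rest of $X$ is attached to $Z$ at a single point $c\in Z$, and $c$ is the only cut point of $X$ lying in $Z$ (that is what degree one in $D$ means). Suppose $Z$ is a true cyclic element. Then $Z$ is closed and, by the basic properties of cyclic elements, $\overline{X\setminus Z}\cap Z=\{c\}$; hence $(X\setminus Z)\cup\{c\}=\overline{X\setminus Z}$ is closed, so $Z\setminus\{c\}$ is a nonempty open subset of $X$. But $Z\setminus\{c\}$ contains no cut point of $X$, since $c$ is the only cut point of $X$ in $Z$ --- contradicting density of cut points. \textbf{This is the crux of the proof, and the only place where the hypothesis is used.} Therefore $Z$ is a single non-cyclic point $x$; and $x$ cannot be a cut point of $X$, since otherwise $X\setminus\{x\}$ would have at least two components and the node $\{x\}$ would have degree at least two in $D$. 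Hence $x$ is an end point of $X$, so $\End(X)\neq\emptyset$.

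For a proof using only the results recalled above, one can instead work with ``arms'': for a cut point $p$ and a component $C$ of $X\setminus\{p\}$, Remark \ref{opencomponents} gives that $\overline C=C\cup\{p\}$ has one-point boundary $\{p\}$, and one checks that $X\setminus C$ is again a subcontinuum; since $C$ is open and nonempty it contains a cut point, so every arm $\overline C$ contains a proper sub-arm inside $\Int\overline C$. If one can build a decreasing sequence of arms $A_1\supsetneq A_2\supsetneq\cdots$ with $A_{n+1}\subseteq\Int A_n$ and $\diam A_n\to0$, then $\bigcap_nA_n=\{e\}$ is a single point, each $\Int A_n$ is a neighbourhood of $e$ of vanishing diameter with one-point boundary, and $e\in\End(X)$. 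In this approach the obstacle is forcing $\diam A_n\to0$: a single cut inside an arm need not shrink its diameter, and small arms can lie far from any prescribed region (think of the centre of a harmonic fan), so exhibiting, inside a given arm, sub-arms of arbitrarily small diameter requires an iterated construction combined with Theorem \ref{finecovers} and a compactness argument --- establishing that the iteration cannot fail to terminate is the technical heart of this second route.
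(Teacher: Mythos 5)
Your first route is essentially correct and is genuinely different from the paper's argument. The crux --- that a \emph{terminal} true cyclic element $Z$ with attaching point $c$ would make $Z\setminus\{c\}$ a nonempty open set containing no cut point of $X$, contradicting density --- is sound: $(X\setminus Z)\cup\{c\}$ is closed and connected (each component of $X\setminus Z$ has $c$ in its closure), so $Z\setminus\{c\}$ is open and no $z\in Z\setminus\{c\}$ separates $X$. One caveat: as stated, the ``upper semicontinuous decomposition'' is not literally a partition, since distinct true cyclic elements may share a cut point (two circles glued at a point), so the quotient-dendrite formulation needs care. What you actually use is Whyburn's theorem that a Peano continuum with more than one cyclic element has at least two \emph{end elements}, together with the fact that a one-point end element is an end point of $X$; citing those directly makes the argument clean. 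The paper instead follows Nadler's non-cut-point existence proof: it takes a maximal nest (via the Kuratowski--Zorn Lemma) of subcontinua with one-point boundary whose complements-of-interiors are components, and uses a cut point inside a suitable open subset of the intersection to enlarge the nest, forcing the intersection to be a single end point. Your approach buys a short proof at the price of importing cyclic element theory; the paper's is self-contained modulo Nadler's book. Your second route is in spirit the paper's route, and the obstacle you correctly identify (forcing $\diam A_n\to 0$) is exactly what the paper avoids: it never shrinks diameters explicitly, but gets $\bigcap\hat{\F}$ degenerate from maximality of the nest. As written, route 2 is incomplete, but route 1 suffices.
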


\begin{proof}
   The proof follows the ideas in \cite[Theorem 6.6]{nadler}  (non-cut point existence). Denote
   \[
   \F':= \{F\subset X:\,F \text{ is a continuum, } |\bd F|\leq1, F\setminus\bd F \text{ is a component of }X\setminus \bd F\}
   \]
    and note that $\F'$ is nonempty since $X\in \F'$. We say that a set $\F\subset\F'$ is a nest if any two $V,W
   \in \F$ satisfy $V\subset W$ or $W\subset V$. It is clear that nests are partially ordered by inclusion and that every chain of nests has an upper bound, which is the nest obtained as the union of the elements in the chain. Hence, by the Kuratowski-Zorn Lemma, there is a maximal nest $\hat{\F}\subset\F'$. Denote $F:=\bigcap\hat{\F}$.

   We claim that by the compactness of $X$, if $U$ is an open set satisfying $F\subset U$, then there exists $\hat{F}\in\hat{\F}$ satisfying $\hat{F}\subset U$. Indeed, if $F\subset U$ for some open $U$, then $\{X\setminus \hat{F};\,\hat{F}\in\hat{\F}\}\cup \{U\}$ is a cover of $X$ by open sets. Thus, there are $\hat{F}_1,\dots,\hat{F}_n\in\hat{\F}$ such that $X=(X\setminus\hat{F}_1)\cup\dots\cup(X\setminus\hat{F}_n)\cup U$ by the compactness of $X$. Since $\hat{\F}$ is a nest, there is $1\leq i\leq n$ such that $\hat{F}_i=\hat{F}_1\cap\dots\cap\hat{F}_n$; in other words, $(X\setminus\hat{F}_i)=(X\setminus\hat{F}_1)\cup\dots\cup(X\setminus\hat{F}_n)$. Hence $X=(X\setminus\hat{F}_i)\cup U$,  which implies that $\hat{F}_i\subset U$.

   Using the claim, we obtain that $F\ne\emptyset$, since otherwise for $U:=\emptyset$, there would be $\hat{F}\in\hat{\F}$ satisfying $\hat{F}\subset U=\emptyset$, which is a contradiction. Using the claim for $U:= \mathcal{N}(F,1/n)$, $n\in\N$, we obtain that there are $F_n\in\hat{\F}$ such that $F_n\subset \mathcal{N}(F,1/n)$, $n\in\N$. Thus, in particular, $\bigcap_{n\in\N}F_n=F$. For each $n\in\N$, let $p_n\in X$ be the point satisfying $\bd{F_n}=\{p_n\}$ if $|\bd{F_n}|=1$, respectively let $p_n\in X$ be arbitrary if $F_n=X$. After passing to a subsequence, if necessary, we may assume that $\lim p_n=p$ exists by the compactness of $X$, while still having $\bigcap_{n\in\N}F_n=F$.

   We want to prove that $|F|=1$, so assume for the sake of contradiction that $F$ is nondegenerate. Thus, there exists $s\in F\setminus\{p\}$ since $F$ has at least two points. Let $V$ be a neighborhood of $s$ that is disjoint with a neighborhood of $p$, we may assume that $V$ is connected since $X$ is Peano. We may also assume that $p_n\not\in V$ for every $n\in\N$ after we drop finitely many elements of the sequence, still having $\bigcap_{n\in\N}F_n=F$. Fix $n\in\N$ and note that $V$ is a connected subset of $X\setminus\{p_n\}\subset X\setminus\bd{F_n}$, thus, we may denote by $C_n$ the connected component of $V$ in $ X\setminus\bd{F_n}$. Further, since $F_n\setminus\bd{F_n}$ is a connected component of $X\setminus\bd{F_n}$ too and $s\in F\cap V\subset C_n\cap (F_n\setminus\bd{F_n})$, we obtain $C_n=F_n\setminus\bd{F_n}$. Thus, we obtain $V\subset F_n\setminus\bd{F_n}$ for every $n\in\N$, in particular, $V\subset F$.

   {Let $q \in V$ be a cut point of $X$. By Remark~\ref{opencomponents}, for each component $W$ of $X \setminus \{q\}$, $\overline{W}= W \cup \{q\} \in \F'$. Let $W_0$ and $V_0$ be components of $X \setminus \{q\}$ such that $p \in W_0$ and $V_0 \neq W_0$. Since $q \in \overline{W}_0 \cap \overline{V}_0\cap V$, we can fix points $s_0 \in V_0 \cap V $ and $w_0 \in W_0 \cap V $. 
    Then the set $V_0$ is a connected neighborhood of $s_0\in V\subset F$ and also $V_0$ is disjoint with $W_0$ a neighborhood of $p$, so proceeding as in the previous paragraph, we conclude that $V_0 \subset F$.
   Thus $F_0 := \overline{V_0} = V_0 \cup \{q\}\subset F$. Since $w_0  \in F \setminus F_0$, we have that $F_0 \subsetneq F$. Thus $F_0 \subsetneq F^*$ for every $F^* \in \hat{\F}$, hence $\hat{\F} \cup \{F_0\} \subset \F'$ is a nest strictly larger than $\hat{\F}$, which is a contradiction. Thus
$|F|= 1$. 

Let $x  \in X$ be the unique point satisfying $F= \{x\}$. Observe that $x \in \End(X)$, since $\{F_n\}_{n\in \N}$ is a base of neighborhoods for $x$ with singleton boundaries, which completes the proof.}
\end{proof}

\begin{lem}\label{existencenonloc}
    Let $X$ be a nondegenerate Peano continuum. Then there is $x \in X$ satisfying one of the following:
    \begin{enumerate}
        \item\label{notlocalcutpoint} $x$ is not a local cut point of $X$, or,
        \item\label{localcutpointordertwo} $x$ is a local cut point of $X$ that is not a cut point of $X$, and the Menger order of $x$ is equal to 2.
    \end{enumerate}
\end{lem}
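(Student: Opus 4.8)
The plan is to split into two cases according to whether $X$ has a point that is not a local cut point. If it does, the first alternative in the conclusion holds and we are finished, so from now on assume that every point of $X$ is a local cut point.

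I would first use this assumption to show $\End(X)=\emptyset$, via the observation that an endpoint of a Peano continuum is never a local cut point. Indeed, suppose $e\in\End(X)$ were a local cut point, witnessed by a connected open set $U\ni e$ with $U\setminus\{e\}$ disconnected; choose points $p_1,p_2$ in two distinct components of $U\setminus\{e\}$ and join them by an arc $\gamma\subseteq U$ (a connected open subset of a Peano continuum is arcwise connected; see \cite{nadler}). Since $p_1,p_2$ lie in different components of $U\setminus\{e\}$, the arc $\gamma$ must contain $e$, and $e$ is an interior point of $\gamma$. As $e\in\End(X)$, we may pick a connected open neighborhood $N$ of $e$, small enough that $p_1,p_2\notin\overline N$, with one-point boundary $\partial N=\{q\}$, $q\ne e$ (take the component of $e$ in the interior of a sufficiently small neighborhood whose boundary has at most one point; its boundary then has exactly one point since $X$ is connected and nondegenerate, and it is open by Remark~\ref{opencomponents}). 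Then $\gamma(0)=p_1$ and $\gamma(1)=p_2$ lie outside $\overline N$ but an interior value of $\gamma$ lies in $N$, so the component of $\gamma^{-1}(N)$ containing that value is an interval $(s,s')$ with $0<s<s'<1$ and $\gamma(s),\gamma(s')\in\overline N\setminus N=\{q\}$, contradicting injectivity of $\gamma$. Hence $\End(X)=\emptyset$, so by Lemma~\ref{densecutpoints} the set of cut points of $X$ is not dense. Consequently the complement of its closure is a nonempty open set; let $V$ be one of its connected components, open by Remark~\ref{opencomponents} and nondegenerate. Every point of $V$ is a local cut point of $X$ that is not a cut point of $X$, so it now suffices to produce a point of $V$ of Menger order $2$.

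For this I would pass to a Peano subcontinuum of $V$: by Theorem~\ref{finecovers}, taking a member of a sufficiently fine cover of $X$ that contains a fixed point of $V$ (and hence, being of small diameter, lies inside $V$), there is a Peano continuum $W\subseteq V$ with $\Int W\ne\emptyset$ and $W=\overline{\Int W}$. For $x\in\Int W$ the neighborhoods of $x$ lying in $\Int W$ are the same computed in $W$ or in $X$, so $x$ has the same Menger order in $W$ and in $X$, and (shrinking a witnessing neighborhood into $\Int W$) $x$ is a local cut point of $W$. Thus $W$ is a Peano continuum with a nonempty open set all of whose points are local cut points, and I would invoke the structure theory of Peano continua (cyclic element theory; see \cite{nadler}) to conclude that $W$ contains a free arc, i.e.\ an arc $A$ such that $A$ minus its two endpoints is open in $W$. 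Shrinking $A$, we may assume that $A$ minus its endpoints is contained in $\Int W$, hence open in $X$; then every interior point of $A$ has a neighborhood basis of subarcs of $A$ and so has Menger order exactly $2$ in $X$. Since these interior points lie in $\Int W\subseteq V$, any one of them satisfies the second alternative of the conclusion.

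The step I expect to be the main obstacle is this last structure-theoretic input — that a Peano continuum possessing a nonempty open set of local cut points must contain a free arc (equivalently, that such an open set cannot consist entirely of points of Menger order $\geq 3$). If one prefers an ad hoc argument: every point of $V$ lies on a simple closed curve in $X$ (being a local cut point that is not a cut point, it belongs to a nondegenerate subcontinuum of $X$ with no cut points of its own, and in such a continuum any two points lie on a common simple closed curve), and then, using the standard fact that for each $\varepsilon>0$ only finitely many such subcontinua of $X$ have diameter exceeding $\varepsilon$, one can locate a point of $V$ lying on a simple closed curve which has a neighborhood meeting no other cyclic element or branch of $X$, forcing that point to have Menger order $2$. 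The argument that endpoints are not local cut points was given above, and the remaining bookkeeping about interiors and Menger orders is routine.
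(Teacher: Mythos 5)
Your overall route is genuinely different from the paper's and its first two thirds are correct. The paper assumes no point satisfies alternative \eqref{localcutpointordertwo} and then manufactures a non-local-cut point by a nested diagonal construction inside smaller and smaller Peano subcontinua; you instead assume every point is a local cut point and hunt for one of Menger order $2$. Your argument that an endpoint of a Peano continuum is never a local cut point (the arc through $e$ would have to meet the one-point boundary of a small neighborhood of $e$ twice) is correct, and the passage from there, via the contrapositive of Lemma~\ref{densecutpoints}, to a nonempty open set $V$ consisting of local cut points that are not cut points is fine. The gap is the final step. You reduce everything to the claim that a Peano continuum possessing a nonempty open set of local cut points must contain a free arc, you give no proof or precise citation for it, and you yourself flag it as the main obstacle. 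This is not a statement of standard cyclic element theory, it is strictly stronger than what you need, and nothing you say rules out a Peano continuum in which an open set consists entirely of local cut points while the local cut points of order $\ge 3$ (a countable set, see below) are \emph{dense} in it --- which would destroy every free arc. Your fallback sketch has the same defect: locating a simple closed curve ``with a neighborhood meeting no other cyclic element or branch of $X$'' is exactly the kind of step that fails when branch points are dense, and it is asserted, not proved.

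What you actually need is much weaker and is precisely the external input the paper itself relies on: Whyburn's theorem that in a Peano continuum the set of local separating (cut) points whose Menger order is different from $2$ is at most countable (the paper invokes \cite[Theorem 7]{Whyburn} in this role, applied to cut points of subcontinua). Granting that, your proof closes immediately: $V$ is an uncountable set of local cut points of $X$ that are not cut points; none of them has order $\le 1$, since a point of order $\le 1$ is an endpoint and you have already shown endpoints are not local cut points; only countably many have order $\ge 3$; hence some point of $V$ has order exactly $2$ and satisfies alternative \eqref{localcutpointordertwo}. With that substitution your argument is correct and arguably shorter than the paper's; as written, the free-arc step is a genuine gap.
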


\begin{proof}

If there is $x\in X$ satisfying \eqref{localcutpointordertwo}, then the proof is complete, so let us assume that such a point does not exist.
    By Lemma \ref{densecutpoints} , if the set of cut points of $X$ is dense in $X$, then 
    $\End(X)\neq \emptyset$. Fix $x\in \End(X)$ and let $U\ni x$ be an {open} connected neighborhood (which exists because $X$ is a Peano continuum). Since $x\in\End(X)$, we also have $x\in\End(U)$, and thus $x$ is a non-cut point of $U$ (cf. \cite[Exercise 6.25b)]{nadler}).

Since $U$ was arbitrary, \eqref{notlocalcutpoint} holds.
    
      Consider the second possibility: that the set of cut points of $X$ is not dense.  
    Let $U\subset X$ be a nonempty connected open set
    that contains no cut points of $X$. Put $U_0:=U$ and proceed by induction.
    
    Assume that for some $n\geq 0$ we have a sequence
    \[
    U_{n}\subset F_n \subset U_{n-1}\subset F_{n-1}\subset\dots\subset F_1\subset U_0,
    \]
    where each $U_i$ is open, connected, and nonempty, and each $F_i$ is a Peano continuum satisfying $\diam F_i<2^{-i}$. Furthermore, assume that for each $1\leq i\leq n$, at most countably many cut points of $F_i$ belong to $U_i$, and that we have enumerated all such points as $x^i_1,x^i_2,\dots$. Finally, assume that $x^i_k\not\in U_j$ if $\max\{i,k\}\leq j$.

      By Theorem \ref{finecovers}, there is a Peano continuum $F_{n+1}\subset U_n$ of diameter $\diam (U_n)<2^{-n-1}$ and with a nonempty interior. Take $U'_{n+1}\subset \Int{F_{n+1}}\ne\emptyset$ any open and connected nonempty set.
    
    First, assume that there is $x\in U_{n+1}'$ a cut point of $F_{n+1}$ such that the Menger order of $x$ is 2. Then it is easy to observe that $x$ satisfies \eqref{localcutpointordertwo}: $x$ is a local cut point of $X$ since it is a cut point of the open connected set $U'_{n+1}\subset F_{n+1}$, $x$ is not a cut point of $X$ since no point in $U_0$ is and $x\in U_0$, and the Menger order of $x$ is the same in both $X$ and $F_{n+1}$ since the Menger order is a local notion. This is a contradiction.
    
     Thus, assume that no cut point of $F_{n+1}$ contained in $U_{n+1}'$ has the Menger order 2. Then, by \cite[Theorem 7]{Whyburn} the set of cut points of $F_{n+1}$ that belong to $U_{n+1}'$ is at most countable. Let us enumerate all such points as $x^{n+1}_1,x^{n+1}_2,\dots$. Choose $U_{n+1}\subset U'_{n+1}$ any nonempty open connected set satisfying $x^i_{k} \notin U_{n+1}$ for $i,k\leq n+1$. This completes the induction.

 By the compactness of $X$, there is $x\in X$ such that $\bigcap_{n\in\N}F_n=\{x\}$. It remains to check that $x$ is not a local cut point of $X$. Thus, assume for the sake of contradiction that $x$ is a local cut point of $X$. Then there is $V$, a connected neighborhood of $x$ such that $V\setminus\{x\}$ is not connected. By the construction, there is $n\in\N$ such that $F_n\subset V$. Then $x\in F_{n+1}\subset U_n\subset F_n$, hence $F_n$ is a neighborhood of $x$. Necessarily, $x\in U_n$ must be a cut point of $F_n$ and thus $x=x^n_i$ for some $i\in\N$. However, by the construction, 
    \[
    x=x^n_i\notin U_{\max\{n,i\}}\supset F_{\max\{n,i\}+1}\ni x,
    \]
    a contradiction. Thus $x$ satisfies $\eqref{notlocalcutpoint}$.
\end{proof}

\begin{lem}\label{lifting}
    Suppose $X$ is a Peano continuum{,} $x\in X$ has Menger order 2, is a local cut point, but is not a cut point. Then there is $\widehat{X}$ a Peano continuum 
     and a continuous map $\phi\colon \hat{X}\to X$ such that $\phi^{-1}(y)$ is a singleton for every $y\neq x$ and consists of exactly two endpoints otherwise.
\end{lem}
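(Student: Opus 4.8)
The plan is to build $\widehat X$ by ``splitting'' the point $x$ into two new endpoints $x_1,x_2$, one for each local component of $x$, and to let $\phi$ collapse $x_1,x_2$ back to $x$ while being the identity elsewhere.

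I would first record the local structure of $X$ near $x$. Since $x$ is a local cut point, every sufficiently small connected open neighborhood $W$ of $x$ has $W\setminus\{x\}$ disconnected (disconnectedness passes to smaller connected neighborhoods, because $x$ lies in the closure of each side of a disconnection). Since $x$ is not a cut point, no component $C$ of $W\setminus\{x\}$ can satisfy $\overline C=C\cup\{x\}$, since then $C$ and $X\setminus\overline C$ would disconnect $X\setminus\{x\}$; hence $\overline C\cap\partial W\ne\emptyset$ for each component $C$. Since the Menger order of $x$ equals $2$, one may moreover take $|\partial W|\leq 2$ (replacing $W$ by the component of $x$ in a neighborhood with small boundary keeps the boundary of size $\leq 2$, by Remark~\ref{opencomponents}). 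Combining these facts with the classical order theory of Peano continua, one obtains the following normal form: there is a connected open neighborhood $U$ of $x$ with $U\setminus\{x\}=U_1\sqcup U_2$ such that for every connected open neighborhood $W$ of $x$ with $W\subseteq U$, the set $W\setminus\{x\}$ is the union of exactly two open connected components $W_1,W_2$ (indexed so that $W_i\subseteq U_i$), and $\overline{W_i}\cap\partial W$ consists of a single point $a_i(W)$. I expect this normal form to be the delicate step; everything afterwards is bookkeeping about the ``cutting'' topology.

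Now define $\widehat X:=(X\setminus\{x\})\sqcup\{x_1,x_2\}$, topologized by the open subsets of $X\setminus\{x\}$ together with the sets $\{x_i\}\cup(V\cap U_i)$, where $V$ ranges over open neighborhoods of $x$ in $X$ and $i\in\{1,2\}$, and let $\phi\colon\widehat X\to X$ be the identity on $X\setminus\{x\}$ with $\phi(x_1)=\phi(x_2)=x$. Then $\phi$ is continuous with the required fibers, directly from the definitions. One checks that $\widehat X$ is a Peano continuum: it is Hausdorff (as $U_1\cap U_2=\emptyset$ separates $x_1$ from $x_2$, and a separation of $x$ from $y$ in $X$ pulls back to separate $x_i$ from $y\in X\setminus\{x\}$); second countable (adjoin to a countable base of $X\setminus\{x\}$ the countable local bases at $x_1,x_2$ coming from a countable base at $x$ in $X$); compact (given an open cover, pick $O_i\ni x_i$ with $\{x_i\}\cup(V_i\cap U_i)\subseteq O_i$ and $V_i\subseteq U$, cover the compact set $X\setminus(V_1\cap V_2)\subseteq X\setminus\{x\}$ by finitely many further members, and note $(V_1\cap V_2)\setminus\{x\}\subseteq(V_1\cap U_1)\cup(V_2\cap U_2)\subseteq O_1\cup O_2$); connected ($X\setminus\{x\}$ is connected since $x$ is not a cut point, and each $x_i$ lies in its closure in $\widehat X$); and locally connected --- inherited from $X$ at points of the open set $X\setminus\{x\}$, while at $x_i$ the sets $\{x_i\}\cup W_i$, for $W\subseteq U$ a small connected open neighborhood of $x$ with $W\cap U_i=W_i$, form a basis of connected open neighborhoods. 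Metrizability then follows by the Urysohn metrization theorem.

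Finally, $x_1,x_2\in\End(\widehat X)$. For $W\subseteq U$ a small connected open neighborhood of $x$, the set $\{x_i\}\cup W_i$ is open in $\widehat X$, and computing closures in $\widehat X$ gives $\overline{\{x_i\}\cup W_i}=\{x_i\}\cup W_i\cup(\overline{W_i}^{X}\cap\partial W)=\{x_i\}\cup W_i\cup\{a_i(W)\}$, so this neighborhood of $x_i$ has one-point boundary in $\widehat X$. As these sets form a neighborhood basis at $x_i$, the point $x_i$ is an endpoint of $\widehat X$, which completes the proof. The only genuinely nontrivial ingredient is the local normal form at the order-$2$ point $x$; the remainder is routine point-set verification for the cutting topology.
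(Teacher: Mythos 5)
Your overall strategy --- splitting $x$ into two new points, one for each local component at $x$, and letting $\phi$ collapse them back --- is exactly the strategy of the paper's proof, which realizes $\widehat{X}$ by replacing $A_i\cup\{x\}$ (for $A_1,A_2$ the two components of $A\setminus\{x\}$, $A$ a closed Peano neighborhood of $x$ built from a fine cover) by the one-point compactifications $A_i\cup\{x_i\}$. Your hand-made ``cutting'' topology is equivalent to this, and the point-set verifications you carry out (Hausdorffness, compactness, connectedness, continuity and fibers of $\phi$, local connectedness away from $x_1,x_2$) are correct.

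The genuine gap is the ``normal form,'' which you flag as delicate but do not prove, and which is in fact false as stated. Menger order $2$ gives \emph{some} basis of neighborhoods with boundaries of size at most $2$, not a bound on the boundary of \emph{every} connected open $W\subseteq U$ containing $x$. For example, let $X$ be a circle with a null sequence of small circles attached at points converging to $x$ from both sides: then $x$ has order $2$, is a local cut point and not a cut point, but if $U$ is a basic neighborhood and $W=U\setminus K$ for $K$ a closed arc inside one attached circle, then $W$ is connected and open with $|\partial W|=4$ and one component of $W\setminus\{x\}$ accumulates on three boundary points. So the claim must at least be restricted to a basis witnessing the order; and even then its two substantive parts --- that $W\setminus\{x\}$ has at most two components, and that the two boundary points of $W$ are distributed one to each component --- are precisely where the mathematical content lies. (The second does not follow formally from the first together with $|\partial W|=2$: in a theta-curve with $x$ the midpoint of one edge and $W$ the complement of two points on the outer circle, both components of $W\setminus\{x\}$ accumulate on both boundary points; only smallness of $W$ can rule this out, and that is what needs an argument.) Both parts are load-bearing in your write-up: ``exactly two components'' is what makes $\{x_i\}\cup(W\cap U_i)$ equal to the connected set $\{x_i\}\cup W_i$, hence gives local connectedness at $x_i$, and ``one boundary point per component'' is the entire content of your endpoint verification. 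The paper reduces the component count to \cite[Observation 13]{KarasovaVejnar} applied to the closed neighborhood $A$, uses \cite[Observation 11]{KarasovaVejnar} to see that $A_i\cup\{x\}$ is Peano, and then gets the Peano property of $\widehat{X}$ for free as a connected finite union of Peano continua; since $\bd{A_i}=\{x\}$ in $A$, the boundary bookkeeping is localized at $x$ itself rather than at $\partial W$. Until you either prove the (suitably restricted) normal form or replace it by such a reduction, the proof is incomplete.
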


\begin{proof}
    Let $U$ be a connected neighborhood of $x$ such that $U\setminus \{x\}$ is not connected. Cover $X$ with a finite family $\F$ consisting of Peano continua small enough to ensure that if $F\in\F$, $x\in F$, then $F\subset U$. Denote $A:=\bigcup \{F\in \F;\,x\in F\}$ and observe that $A\subset U$ is a connected neighborhood of $x$, in particular, $x$ is a cut point of $A$ of Menger order 2. It follows easily using \cite[Observation 13]{KarasovaVejnar} that $A\setminus\{x\}$ has exactly two components, say $A_1,A_2$ (at least two components since $x$ is a cut point of $A$, and at most two components by the Observation, since the Menger order of $x$ in $A$ is 2).
    
     By definition, $A$ is a Peano continuum and $\bd{A_1}=\{x\}=\bd{A_2}$.
    Hence $A_i\cup\{x\}$, $i=1,2$ are Peano continua by \cite[Observation 11]{KarasovaVejnar}, as $A_1,A_2$ are open in $A$ by Remark~\ref{opencomponents}. Replace each $A_i\cup\{x\}$ by  the one point compactification $A_i\cup\{x_i\}$, where points $x_1,x_2$ are distinct and denote this extended space $\widehat{X}$. It is clear that $\widehat{X}$ is a Peano continuum, as a connected union of finitely many Peano continua (recall that $x$ is not a cut point of $X$). Since the natural identification $\phi \colon \widehat{X}\setminus \{x_1,x_2\} \to X\setminus \{x\}$ is a homeomorphism, it extends to the desired continuous map $\phi$ on $\widehat{X}$.
\end{proof}

Informally speaking, Lemma~\ref{lifting} ensures that we can ``cut'' $X$ at the local cut point $x$, keeping it a Peano continuum.

\begin{lem}\label{noncutproperty}
    Let $X$ be a Peano continuum, $x \in X$ a non-cut point, and ${\eps}>0$. Then there are Peano continua $A, B \subset X$ such that $x \in X \setminus B\subset A\subset B(x,\varepsilon)$, and $\cl{{\Int B}}=B$.
\end{lem}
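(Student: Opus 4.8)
The plan is to build $A$ as a small Peano--continuum neighbourhood of $x$ and $B$ as a large regular-closed Peano continuum missing $x$, by two applications of Theorem~\ref{finecovers}, with the connectedness of $X\setminus\{x\}$ doing the essential work in between. We may assume that $X$ is nondegenerate and, after replacing $\varepsilon$ by a smaller number if necessary, that $X\not\subseteq B(x,\varepsilon)$; fix $y_0\in X$ with $d(x,y_0)\ge\varepsilon$.

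First I would apply Theorem~\ref{finecovers} to obtain a finite cover $\F$ of $X$ by Peano continua with $F=\overline{\Int F}$ for each $F\in\F$ and $\mesh\F<\varepsilon$, and set
\[
A_0:=\bigcup\{F\in\F:\ x\in F\},\qquad B_0:=\bigcup\{F\in\F:\ x\notin F\}.
\]
Then $A_0$ is a connected finite union of Peano continua (connected because every member contains $x$), hence itself a Peano continuum, with $x\in A_0\subseteq B(x,\varepsilon)$ since every member containing $x$ has diameter $<\varepsilon$. The set $B_0$ is closed, contains $y_0$ (so it is nonempty), misses $x$, satisfies $X=A_0\cup B_0$, and obeys $\overline{\Int B_0}=B_0$ because it is a finite union of sets of the form $\overline{\Int F}$. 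The only property of the pair $(A_0,B_0)$ that can fail is connectedness of $B_0$.

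Repairing this is the crux, and the only place where the non-cut hypothesis is used. Since $x$ is a non-cut point, $Y:=X\setminus\{x\}$ is connected; being a connected open subset of a Peano continuum it is arcwise connected (see \cite{nadler}; open connected subsets of Peano continua are arcwise connected). As a finite union of continua, $B_0$ has finitely many components $B^{(1)},\dots,B^{(m)}$, each a connected finite union of members of $\F$, hence a Peano continuum. Choosing $p_j\in B^{(j)}$ and joining $p_1$ to each $p_j$ by an arc $\gamma_j\subseteq Y$, the set $B_1:=B_0\cup\gamma_2\cup\cdots\cup\gamma_m$ is a connected finite union of Peano continua, hence a Peano continuum, with $x\notin B_1$ and $X\setminus B_1\subseteq X\setminus B_0\subseteq A_0$. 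I expect this merging step to be the main obstacle; everything afterwards is routine bookkeeping.

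Finally I would restore the identity $\overline{\Int B}=B$, which the (possibly thin) arcs $\gamma_j$ may have spoiled. Since $B_1$ is compact and misses $x$, the number $\rho:=d(x,B_1)$ is positive; applying Theorem~\ref{finecovers} once more gives a finite cover $\G$ of $X$ by Peano continua with $G=\overline{\Int G}$ for each $G\in\G$ and $\mesh\G<\rho$, and I set $B:=\bigcup\{G\in\G:\ G\cap B_1\neq\emptyset\}$. Then $B_1\subseteq B$ (as $\G$ covers $X$); $B$ is a connected finite union of Peano continua, because each member meets the connected set $B_1\subseteq B$, hence $B$ is a Peano continuum; $\overline{\Int B}=B$, since $B$ is a finite union of sets $\overline{\Int G}$; and $x\notin B$, since every member $G$ occurring in the union meets $B_1$ but has diameter $<\rho=d(x,B_1)$, so it cannot contain $x$. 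As $X\setminus B\subseteq X\setminus B_1\subseteq A_0$, taking $A:=A_0$ produces Peano continua $A$ and $B$ with $x\in X\setminus B\subseteq A\subseteq B(x,\varepsilon)$ and $\overline{\Int B}=B$, which is what we wanted.
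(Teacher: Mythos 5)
Your proposal is correct and follows essentially the same route as the paper: a fine cover split into the pieces containing $x$ (giving $A$) and those missing $x$, arcs inside the connected open set $X\setminus\{x\}$ (this is exactly where the non-cut hypothesis enters) to merge the latter into one Peano continuum, and a second, finer cover at distance less than $d(x,B_1)$ from $x$ to restore $\overline{\Int B}=B$. The only cosmetic differences are that the paper joins every pair of members of $\mathcal{B}$ by arcs rather than the components, and that you explicitly handle the degenerate case $X\subseteq B(x,\varepsilon)$, which the paper leaves implicit.
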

\begin{proof}
    Let $\F$ be a finite cover of $X$ by Peano continua such that $\mesh\F<\varepsilon/2$. Let $\mathcal{A}=\{F\in\F:\,x\in F\}$ and $\mathcal{B}=\{F\in\F:\,x\notin F\}=\F\setminus \mathcal{A}$. The set $X\setminus \{x\}$ is an open, connected subset of the Peano continuum $X$, hence it is arcwise connected (see \cite[Theorem 8.26]{nadler}). Therefore, for any two sets $B,C \in \mathcal{B}$, we can find an arc $L_{B,C} \subset X \setminus \{x\}$ such that $B \cap L_{B,C} \neq \emptyset$ and $C \cap L_{B,C} \neq \emptyset$.

      Now, we let $A=\bigcup \mathcal{A}$ and $B'=\bigcup\mathcal{B} \cup \bigcup_{B,C \in \mathcal{B}} L_{B,C}$. Clearly, both $A,B'$ are Peano continua as connected unions of Peano continua. By definition $x \in X \setminus B'\subset A\subset B(x,\varepsilon)$. We will slightly modify $B'$ to obtain a set such that $\cl{{\Int B} }=B$. Take $\delta>0$ sufficiently small, so that $B(x,\delta)\subset A\setminus B'=X\setminus B'$. Let $\H$ be a finite cover of $X$ by Peano continua such that $\mesh\H<\delta/2$ and $\cl{{\Int H} }=H$ for every $H\in\H$ provided by Theorem~\ref{finecovers}. Let
    \[
    B:=\bigcup\{H\in\H;\,H\cap B'\ne\emptyset\}.
    \]
    It is clear that $\cl{{\Int B}}=B$ and
    $$
    x\in X\setminus B\subset X\setminus B'\subset A\subset B(x,\eps)
    $$
    completing the proof.
\end{proof}

\section{Proof of Theorem~\ref{thm:pure}}

The proof of Theorem~\ref{thm:pure} will be obtained by a careful construction of two maps $f\colon X\to [0,1]$ and $g\colon [0,1]\to X$ whose composition $h=g\circ f$ provides the desired map on the Peano continuum $X$. The first of these maps will be constructed with the help of the following general tool.

{
\begin{lem}\label{projections}
    Let $X$ be a metric space, $x,y  \in X$, $x \neq y$ and let $T$ be a countable subset of $X$ with $\{x,y\} \cap T= \emptyset$. Then there exists a continuous function $f\colon X \to [0,1]$ such that $f^{-1}(0) = \{x\}$, $ f^{-1}(1) = \{y\}$, and $f|_T \colon  T  \to [0,1]$ is one-to-one. In particular, if $X$ is a Peano continuum,
then we can choose $f \colon  X \to [0,1]$ to satisfy that the image of any nonempty open set has a nonempty interior.
\end{lem}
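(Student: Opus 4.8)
The plan is to realise $f$ as a tame perturbation of the canonical function
$f_{0}(z):=\dfrac{d(z,x)}{d(z,x)+d(z,y)}$,
which (since $x\ne y$) is continuous, $[0,1]$-valued, and already satisfies $f_{0}^{-1}(0)=\{x\}$ and $f_{0}^{-1}(1)=\{y\}$. The key preliminary observation I would record is a \emph{rigidity device}: for every continuous $g\colon X\to\R$ with $\|g\|_{\infty}<1$, the function
$f:=f_{0}+f_{0}(1-f_{0})\,g$
is continuous, maps into $[0,1]$, and still satisfies $f^{-1}(0)=\{x\}$, $f^{-1}(1)=\{y\}$; this is immediate from the factorisations $f=f_{0}(1+(1-f_{0})g)$ and $1-f=(1-f_{0})(1-f_{0}g)$, whose second factors are bounded below by $1-\|g\|_{\infty}>0$. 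So the boundary conditions become automatic, and everything reduces to producing one continuous $g$ with $\|g\|_{\infty}<1$ for which $f$ is non-constant on each $X_{n}$.

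To build such a $g$ I would use bump functions arranged into a ``staircase'' so that their defining data do not interfere. For each $n$, since $X_{n}$ is infinite, pick distinct $p_{n},q_{n}\in X_{n}\setminus\{x,y\}$ with, in addition, $p_{n}\notin\{p_{k},q_{k}:k<n\}$; let $\psi_{n}$ be the metric tent of height $1$ centred at $p_{n}$ with radius $\delta_{n}>0$ taken so small that $\psi_{n}$ vanishes at $x$, at $y$, at $q_{n}$, and at every $p_{k},q_{k}$ with $k<n$ (possible since $p_n$ differs from all of these points). Equivalently, $\psi_{m}(p_{n})=\psi_{m}(q_{n})=0$ whenever $m>n$. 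Put $g:=\sum_{n\ge1}2^{-n-1}\varepsilon_{n}\psi_{n}$ with signs $\varepsilon_{n}\in\{-1,1\}$ still to be chosen; then $g$ is continuous and $\|g\|_{\infty}\le\tfrac12<1$, so $f$ has the required boundary behaviour for any choice of signs. Because $\psi_{m}$ is ``blind'' to $p_{n}$ and $q_{n}$ for $m>n$, a short computation yields $f(p_{n})-f(q_{n})=D_{n}+u_{n}(1-u_{n})\,2^{-n-1}\varepsilon_{n}$, where $u_{n}:=f_{0}(p_{n})\in(0,1)$ and $D_{n}$ depends only on $\varepsilon_{1},\dots,\varepsilon_{n-1}$ and the fixed data. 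Choosing recursively $\varepsilon_{n}:=1$ if $D_{n}\ge0$ and $\varepsilon_{n}:=-1$ if $D_{n}<0$ forces $f(p_{n})-f(q_{n})\ne0$, hence $|f(X_{n})|\ge 2$, for all $n$.

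For the ``in particular'' statement: a nondegenerate Peano continuum $X$ is compact, connected and locally connected, so, using Remark~\ref{opencomponents}, it admits a countable base $\{B_{n}\}_{n}$ of nonempty connected open sets; each $B_{n}$ is infinite because a nondegenerate continuum has no isolated points. I would then fix any $x\ne y$ in $X$ and apply the first part with $X_{n}:=B_{n}$, obtaining $f$ non-constant on each $B_{n}$. For any nonempty open $U\subset X$, pick $B_{n}\subset U$; the set $f(B_{n})$ is a connected subset of $[0,1]$ with at least two points, hence a nondegenerate interval, so $f(U)\supset f(B_{n})$ has nonempty interior.

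The crux, and the reason for both devices, is a genuine tension: the perturbation must be flexible enough to prise apart a chosen pair of points in each of countably many sets $X_{n}$ simultaneously, yet constrained enough that it never creates a new zero of $f$ nor a new point with $f=1$. The latter is the delicate requirement, since $f_{0}$ is close to $1$ on a whole neighbourhood of $y$, so even a tiny positive bump placed there could push $f$ above $1$; the multiplicative cutoff $f_{0}(1-f_{0})$ removes this danger outright. The ``staircase'' choice of bump radii (each new bump's support avoids all earlier test points) eliminates the circular dependence among the sign choices, so that the $\varepsilon_{n}$ can be selected one at a time. The remaining items — continuity and uniform convergence of the series, the value computations behind the formula for $f(p_{n})-f(q_{n})$, and the bound $\|g\|_{\infty}\le\tfrac12$ — are routine.
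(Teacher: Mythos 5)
Your proof is correct, and it takes a genuinely different route from the paper's. The paper builds $f$ as a uniform limit of an inductively constructed sequence $f_n$: it starts from a map built from two local distance functions glued via the Tietze extension theorem, and at each step perturbs $f_n$ by a small bump (added or subtracted according to whether $f_n(z)\leq 1/2$) to separate two points of $X_{n+1}$, while imposing monotonicity conditions on $f_{n+1}$ over $f_n^{-1}([0,1/4])$ and $f_n^{-1}([3/4,1])$ precisely so that the limit still satisfies $f^{-1}(0)=\{x\}$ and $f^{-1}(1)=\{y\}$; a growing finite set $S_n$ on which all later maps are frozen guarantees that the separations achieved are not destroyed. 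You replace both of these bookkeeping devices: the multiplicative damping factor $f_0(1-f_0)$ makes the endpoint conditions automatic for \emph{any} perturbation $g$ with $\|g\|_\infty<1$ (so nothing analogous to the paper's conditions (2)--(3) is needed), and the ``staircase'' supports of the tents $\psi_n$ replace the sets $S_n$, since they make $D_n$ depend only on $\varepsilon_1,\dots,\varepsilon_{n-1}$ and allow the signs to be chosen one at a time. Your argument is more self-contained (no Tietze extension, no Cauchy-sequence limit beyond a Weierstrass $M$-test) and the verification of the endpoint fibres is a one-line factorisation rather than a limit argument; the paper's scheme is more incremental and perturbs only when $|f_n(X_{n+1})|=1$, but buys nothing essential here. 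The handling of the ``in particular'' clause is the same in both: a countable base of connected open sets (each infinite since a nondegenerate continuum has no isolated points) fed into the main statement. One cosmetic point: for the intended application the Peano-continuum refinement must hold for the \emph{given} pair $x\neq y$, not an arbitrarily chosen one, but your argument delivers exactly that since the first part already works for any prescribed pair.
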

\begin{proof}
Let $T = \{t_1, t_2, \dots\}$ and $t_i \neq t_j$ when $i \neq j$. Denote $T_n = \{t_1, \dots, t_n\}$ for $n \in \mathbb{N}$, $T_0=\emptyset$. Further, let $f_0 \colon X \to [0,1]$ be given by $f_0(p) = \frac{d(p,x)}{d(p,x)+d(p,y)}$ and note that $f_0^{-1}(0) = \{x\}$, $f_0^{-1}(1) = \{y\}$. Denote $F:=f_0^{-1}([0,1/4])$, $E:=f_0^{-1}([3/4,1])$.

We will inductively construct a sequence of continuous maps $f_n \colon X \to [0,1]$ such that:
\begin{enumerate}
    \item\label{projections:p:1} for all $n \in \N_0: f_n^{-1}(0) = \{x\}$ and $f_n^{-1}(1) = \{y\}$
    \item\label{projections:p:5} for all $n \in \N_0$: $f_{n+1}|_F \geq f_n|_F$ and $f_{n+1}|_E\leq f_n|_E$,
    \item\label{projections:p:2} for all  $n \in \N $ the map $f_{n}|_{T_n}$ is injective,
    \item\label{projections:p:3} for all $n \in \N_0$: $f_{n+1}|_{T_n} = f_n|_{T_n}$,
    \item\label{projections:p:4} for all $n \in \N_0$: $d(f_n, f_{n+1}) < 2^{-n-4}$.
\end{enumerate}

It is immediate that the hypotheses for $n=0$ are satisfied.

Thus assume that $f_n$ is already defined and we will construct $f_{n+1}$. If $f_n(t_{n+1}) \notin f_n(T_n)$, we put $f_{n+1} := f_n$.  

Suppose that $f_n(t_{n+1}) \in f_n(T_n)$. Choose a neighborhood $G \subset X\setminus (T_n\cup\{x,y\})$  of the point $t_{n+1}$ such that if $t_{n+1}\notin F$, then $G\cap F=\emptyset$ and if $t_{n+1}\notin E$, then $G\cap E=\emptyset$, this is possible since the sets $F,E$ are closed. Further,  fix $0<\eps<2^{-n-4}$ satisfying 
\[
(f_n(t_{n+1})-\eps,f_n(t_{n+1})+\eps)\cap f_n(T_n)=\{f_n(t_{n+1})\}.
\]
If $f_n(t_{n+1})\leq 1/2$ we put
$$
f_{n+1}=f_n+\min\{\eps,d(\,\cdot\,,X\setminus G)\},
$$ 
otherwise we put
$$
f_{n+1}=f_n-\min\{\eps,d(\,\cdot\,,X\setminus G)\}.
$$

We will only verify \eqref{projections:p:5}, as verifying the other conditions is straightforward. To see that $f_{n+1}|_F \geq f_n|_F$, first note that $f_{n+1}|_F = f_n|_F$ whenever $F\subset X\setminus G$, in particular whenever $t_{n+1}\notin F$ by our choices. Thus assume $t_{n+1}\in F$, in other words $f_0(t_{n+1})\leq 1/4$. Hence $f_n(t_{n+1})< f_0(t_{n+1})+ 2^{-4}+\dots + 2^{-n-4}<f_0(t_{n+1})+ 2^{-3}$ by \eqref{projections:p:4}, and therefore $f_n(t_{n+1})< 1/4+1/8<1/2$. Thus $f_{n+1} \geq f_n$ follows from $f_{n+1}=f_n+\min\{\eps,d(\,\cdot\,,X\setminus G)\}$. Proving that $f_{n+1}|_E\leq f_n|_E$ is analogous. This completes the induction.

By condition \eqref{projections:p:4}, the sequence $\{f_n\}$ converges uniformly to a continuous function $f \colon X \to [0,1]$ and $d(f,f_0)\leq 1/8$. Since $f_{n+1}|_{T_n\cup\{x,y\}} = f_n|_{T_n\cup\{x,y\}}$ for all $n$ by \eqref{projections:p:1} and \eqref{projections:p:3}, we have $f|_{T_n\cup\{x,y\}} = f_n|_{T_n\cup\{x,y\}}$. In particular, $f$ is injective on $T$ by \eqref{projections:p:2}.

Let $z\in X\setminus\{x\}$. If $z\notin F$ then we obtain $f(z)\geq f_0(z)-1/8\geq 1/4-1/8>0$, otherwise $z\in F\setminus \{x\}$ implies $f(z)\geq f_0(z)>0$ by \eqref{projections:p:5}. Thus $f(z)>0$ in both cases. Analogically, $f(z)\leq f_0(z)+1/8\leq 3/4+1/8<1$ for every $z\in X\setminus E$ and  $f(z)\leq f_0(z)<1$ for every $z\in E\setminus \{y\}$. This finishes the construction and verification of desired properties of $f \colon X \to [0,1]$.

 Finally, assume that $X$ is a Peano continuum. It follows that $X$ has a countable base for the topology $\mathcal{U}$ formed by nonempty connected open sets. By choosing $T$ such that it intersects every $U \in \mathcal{U}$ in at least two points (which is possible since $\mathcal{U}$ is countable and every $U$ is infinite), the injectivity of $f$ on $T$ guarantees that $|f(U)| > 1$ for every $U \in \mathcal{U}$. Because $U$ is connected and $f$ is continuous, $f(U)$ is a connected set, hence, necessarily a nondegenerate interval. Thus, $f(U)$ has a nonempty interior.

\end{proof}}

    \begin{lem}\label{construction}
       Let $X$ be a nondegenerate Peano continuum and let the points $x_1,x_2 \in X$  not be local cut points (they do not have to be distinct). Then there is a continuous map $h:X\to X$ that is mixing and satisfies $h^{-1}(x_i)=\{x_i\}$ for $i=1,2$ (so, in particular, $h$ is not exact).
   \end{lem}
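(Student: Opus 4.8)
The plan is to realize $h$ as a composition $h=g\circ f$, with $f\colon X\to[0,1]$ the map furnished by Lemma~\ref{projections} and $g\colon[0,1]\to X$ a carefully built surjection. Factoring through the interval transfers the problem to the interval map $\psi:=f\circ g\colon[0,1]\to[0,1]$, where ``mixing but not exact'' is available: there are pure mixing interval maps, and one may take such a $\psi_0$ fixing $0$ and $1$ with $\psi_0^{-1}\{0,1\}=\{0,1\}$ (these are the interval examples alluded to in the Introduction).

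\smallskip
\emph{Set-up and reductions.} Assume first $x_1\neq x_2$ (if $x_1=x_2$ a slight simplification applies: only the endpoint $0$ needs special care). Apply Lemma~\ref{projections} to get a continuous onto map $f\colon X\to[0,1]$ with $f^{-1}(0)=\{x_1\}$, $f^{-1}(1)=\{x_2\}$ and $\Int{f(U)}\neq\emptyset$ for every nonempty open $U\subseteq X$. Suppose we have a continuous surjection $g\colon[0,1]\to X$ with $g(0)=x_1$, $g(1)=x_2$, $g^{-1}(x_1)=\{0\}$, $g^{-1}(x_2)=\{1\}$ and with $\psi:=f\circ g$ topologically mixing, and put $h:=g\circ f$. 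Then $h(X)=g(f(X))=g([0,1])=X$, so $h$ is onto, and $h^{-1}(x_i)=f^{-1}(g^{-1}(x_i))=f^{-1}(\{i-1\})=\{x_i\}$; iterating $h^{-1}(x_1)=\{x_1\}$ gives $h^{-n}(x_1)=\{x_1\}$ for all $n$, so $h^{n}(U)\neq X$ whenever $x_1\notin U$, i.e.\ $h$ is not exact. For mixing, note $h^{n}=g\circ\psi^{\,n-1}\circ f$, so $h^{n}(U)=g(\psi^{\,n-1}(f(U)))$. Given nonempty open $U,V\subseteq X$, the sets $\Int{f(U)}$ and $g^{-1}(V)$ are nonempty and open, so mixing of $\psi$ yields $N$ with $\psi^{\,m}(\Int{f(U)})\cap g^{-1}(V)\neq\emptyset$ for all $m\geq N$; hence $\psi^{\,n-1}(f(U))\cap g^{-1}(V)\neq\emptyset$, i.e.\ $h^{n}(U)\cap V\neq\emptyset$, for all $n\geq N+1$. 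Thus $h$ is mixing, as required. (The singleton conditions on $g$ force $\psi(0)=0$, $\psi(1)=1$, $\psi^{-1}\{0,1\}=\{0,1\}$, so $\psi$ itself is automatically pure mixing, consistent with the above.)

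\smallskip
\emph{Construction of $g$: the core.} It remains to build $g$. Fix a pure mixing interval map $\psi_0\colon[0,1]\to[0,1]$ with $\psi_0(0)=0$, $\psi_0(1)=1$, $\psi_0^{-1}\{0,1\}=\{0,1\}$, and construct $g$ as a continuous onto lift of $\psi_0$ through $f$, i.e.\ $f\circ g=\psi_0$. Then $f(g(0))=\psi_0(0)=0$ forces $g(0)\in f^{-1}(0)=\{x_1\}$, so $g(0)=x_1$ and likewise $g(1)=x_2$; moreover, since $x_i$ is the only point of $f^{-1}(i-1)$, $g^{-1}(x_i)=(f\circ g)^{-1}(i-1)=\psi_0^{-1}(i-1)=\{i-1\}$. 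Producing such a lift, and making it surjective onto the (possibly quite wild) Peano continuum $X$, is the heart of the proof: one builds $g$ by an inductive construction in the spirit of Lemma~\ref{projections}, refining partial lifts so that the image progressively fills $X$ (ultimately being onto) while the lift ``oscillates'' suitably near the endpoints $0$ and $1$ of the domain. The hypothesis that $x_i$ is \emph{not a local cut point} is exactly what makes this possible near those endpoints: arbitrarily small connected neighbourhoods $W\ni x_i$ have $W\setminus\{x_i\}$ connected, hence (being open in the Peano continuum) arcwise connected, so $g$ can be routed through $W\setminus\{x_i\}$. This lets $g$ leave $x_i$ at the endpoint parameter and never return—keeping $g^{-1}(x_i)$ a singleton—while staying onto, and it keeps the behaviour of $\psi_0=f\circ g$ near $0,1$ compatible with mixing; at a local cut point a small punctured neighbourhood can disconnect and the lift would be trapped on one side.

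\smallskip
\emph{Where the work lies.} The only substantial step is this construction of $g$, where one must simultaneously (i) keep $f\circ g$ a pure mixing interval map, (ii) make $g$ onto $X$, and (iii) keep $g$ away from $x_1,x_2$ except at the two endpoint parameters; these demands interact most delicately in small neighbourhoods of $x_1$ and $x_2$, which is precisely where the non-local-cut-point hypothesis is consumed. Everything else is formal—for instance $h$ then even inherits a dense set of periodic points, since $g$ carries the (dense) set of periodic points of $\psi_0$ onto a dense subset of the periodic points of $h$ ($\psi_0^{k}(t)=t$ implies $h^{k}(g(t))=g(\psi_0^{k}(t))=g(t)$).
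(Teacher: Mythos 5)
Your formal skeleton is sound and coincides with the paper's: both realize $h=g\circ f$ with $f\colon X\to[0,1]$ supplied by Lemma~\ref{projections}, and your deductions that $h$ is onto, that $h^{-1}(x_i)=\{x_i\}$ kills exactness, and that mixing of $\psi=f\circ g$ transfers to $h=g\circ f$ (via $h^n=g\circ\psi^{n-1}\circ f$ and the nonempty-interior property of $f$) are all correct. But the entire mathematical content of the lemma is the construction of $g$, and you do not carry it out; you explicitly defer it (``the heart of the proof'', ``where the work lies''). Worse, the specific route you propose for it --- fix a pure mixing interval map $\psi_0$ in advance and build $g$ as an \emph{exact} lift, $f\circ g=\psi_0$ --- is a rigid functional equation that there is no reason to believe is solvable. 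It asks for a continuous, surjective selection $g(t)\in f^{-1}(\psi_0(t))$ of the fiber map of $f$; the fibers $f^{-1}(s)$ of the map produced by Lemma~\ref{projections} can be disconnected and need not vary in any controlled way with $s$, so a continuous path threading prescribed fibers while also covering all of $X$ is not something you can conjure by ``refining partial lifts''. Nothing in Lemma~\ref{projections} or in the non-local-cut-point hypothesis gives you this.

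The paper avoids this trap by never prescribing $f\circ g$: it builds $g=\lim g_n$ by an induction in which the only requirements are covering relations --- nested families $\mathcal{F}_n$ of small Peano continua exhausting $X\setminus\{x_1,x_2\}$, intervals $[a_n,b_n]\nearrow(0,1)$ with $g_n([a_n,b_n])=\bigcup\mathcal{F}_n$ and $(g_n\circ f)^{2i}(F)\supset\bigcup\mathcal{F}_i$ for $F\in\mathcal{F}_i$, with $g_n([0,a_n])$, $g_n([b_n,1])$ confined to shrinking neighbourhoods of $x_1,x_2$ obtained from Lemma~\ref{noncutproperty} --- and surjectivity onto each piece is obtained from Theorem~\ref{nadler:8.19}, not from a selection argument. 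Mixing of $g\circ f$ then follows directly from the covering relations (and mixing of $f\circ g$ only a posteriori), and dense periodic points are forced by the auxiliary sets $S_n$ with $f\circ g_n(S_n)=S_n$. Your intuition about where the non-local-cut-point hypothesis enters (routing $g$ through punctured connected neighbourhoods of $x_i$) is the right one, but as written the proposal is a reduction of the lemma to an unproved --- and, in the exact-lift form you state it, likely false --- existence claim, so it does not constitute a proof.
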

   \begin{proof}
   We will construct $h$ as the composition $h=g\circ f$ of maps $f\colon X\to [0,1]$ and $g\colon [0,1]\to X$. Choose any point $y\in X$, $x_1\ne y\ne x_2$. We apply Lemma \ref{projections} and obtain a continuous map $f\colon X\to [0,1]$ that maps any nonempty open set onto a set with nonempty interior, $f^{-1}(0)=\{x_1\}$. Furthermore, $f^{-1}(1)=\{x_2\}$ if $x_1\ne x_2$, or $f^{-1}(1)=\{y\}$ if $x_1= x_2$ (and then $f^{-1}(0)=\{x_2\}=\{x_1\}$ in this case).
   
    In what follows, we will present the construction for the case $x_1\ne x_2$. The construction for the case $x_1=x_2$ is easier, and the details are left to the reader. Roughly speaking, the construction for that case goes for $x_1$ and $y$ but without additional concerns about how $g$ behaves around $g^{-1}(y)$. In the considered case we have $d(x_1,x_2)>1$.
   We will construct $g$ by induction, providing its consecutive approximations $g_n$ with respect to $n\geq 0$. For each $n$ we will provide:
\begin{itemize}
    \item $\H_n=\F_n\cup\{A_n,A_n'\}$: a finite cover of $X$ by Peano continua with nonempty interiors{, $A_n \cap A'_n=\emptyset $} and with $| \F_0|=1$,
    \item $a_n<b_n\in(0,1)$,
    \item $g_n:I\to \bigcup\F_n$ continuous surjection,
    \item $S_n$: a finite subset of $[a_n,b_n]$. 
\end{itemize}

such that for each $n\in \N_0$: 

\begin{enumerate}
    \item\label{prop2} $d(g_{n-1},g_{n})\leq 2^{-n+1}$ when $n>1$,
    \item\label{prop1} $\mesh\F_n\leq 2^{-n-1}$ when $n>0$ and $\max\{\diam A_n, \diam A_n'\}< 2^{-n-2}$,    
    \item\label{prop3a} $x_1\in A_n\setminus(\bigcup\F_n)$, $x_2\in A_n'\setminus(\bigcup\F_n)$ 
     \item\label{prop7} for every $F\in\F_{n-1}$ there is $\F\subset\F_{n}$ satisfying $\bigcup\F=F$ for $n>0$,
    \item\label{prop3b} $a_{n}\leq a_{n-1}/2$, $b_{n}\geq (1+b_{n-1})/2$ for $n>0$, 
    \item\label{prop8} $[a_{n},b_{n}]=f(\bigcup\F_{n-1})\supset [a_{n-1}/2,\,(b_{n-1}+1)/2]$ for $n>0$,
    \item\label{prop4} $g_n([0,a_n])\subset A_n$, $g_n([b_n,1])\subset A_n'$,
    \item\label{prop6} $g_n([a_i,b_i])=\bigcup\F_i$ for every $i\leq n$,
    \item\label{prop9} ${(}f\circ g_n{)}(S_n)=S_n$ (and thus points in $g_n(S_n)$ are periodic under $ g_n\circ f$),
    \item\label{prop10} $S_{n-1}\subset S_{n}$ and $g_{n}|_{S_{n-1}}=g_{n-1}{|_{S_{n-1}}}$ for $n>0$,
    \item\label{prop11} $f(F)\cap S_{n}\ne \emptyset$ for every $F\in \F_{n-1}$ and every $n>0$.
    \item\label{prop5} $(g_n\circ f)^{2i}(F)\cap (g_n\circ f)^{2i+1}(F) \supset \bigcup \F_i$ for every $F\in \F_i$ and $0\leq i\leq n$,
    \item\label{prop13}  for every ${1\leq}i \leq n$ and  $F \in \F_i$ there is {$F' \in \F_{i-1}$ }such that $(g_n \circ f)(F) \supset F'$ 
\end{enumerate}

Note that since $d(g_n,g_{n+1})\leq 2^{-n+1}$ by \eqref{prop2}, the construction produces a Cauchy sequence and, consequently, $g:=\lim_{n\to \infty} g_n$ is a well-defined continuous map. This map is the core of the construction. Let us first explain how the inductive construction is performed.

Fix sufficiently small $0<\varepsilon< 2^{-3}$ so that $f(B(x_1,\varepsilon))\subset [0,1/8]$ and $f(B(x_2,\varepsilon))\subset [7/8,\,1]$. {Note that $x_1$ is a non-cut point of $X$ and thus there are Peano continua $A_0,B\subset X$ such that $x_1\in X\setminus B\subset A_{0}\subset B(x_1,\varepsilon)$ and $\cl{\Int B}=B$ by Lemma \ref{noncutproperty}}. {Then $x_2 \in X\setminus A_0\subset B$ as $x_2\notin B(x_1,{\eps})$, implying $B$ is a connected neighborhood of $x_2$, and therefore $x_2$ is a non-cut point of $B$ since it is not a local cut point of $X$. Hence there are Peano continua $A_0',B'\subset B$ such that $x_2\in B\setminus B'\subset A_{0}'\subset B(x_2,\varepsilon)$ and $\cl{\Int B'}=B'$ by Lemma \ref{noncutproperty}.}
Let $\F_0:=\{B'\}$, $a_0:=1/4$, $b_0:=3/4$, $S_0:=\emptyset$. Note that $A_0\cap B'\ne\emptyset\ne A_0'\cap B'$ since $A_0\cup B'\cup A_0'=X$, all $A_0,B',A_0',X$ are connected and $A_0\cap A_0'=\emptyset$. Thus, we may choose points $a\in A_0\cap B'$ and $b\in A_0'\cap B'$.
By Theorem~\ref{nadler:8.19} there is a continuous surjection {$g_0:[0,1]\to X$ satisfying $g_0(1/4)=a$, $g_0(3/4)=b$, $g_0(0)=x_1$, $g_0(1)=x_2$ and such that $g_0([1/4,3/4])=B'$, $g_0([0,1/4])
=A_0$ and $g_0([3/4,1])
=A_0'$.}
{It is easy to check that these satisfy the induction hypothesis.}
Assume that we have already defined $S_n, g_n$, $\H_n$, $a_n$, $b_n$ and we will find $S_{n+1},g_{n+1}$, $\H_{n+1}$, $a_{n+1}$, $b_{n+1}$.

Note that $f(\bigcup\F_n)$ is connected and closed, as $\bigcup\F_n$ is a continuous image of $I$ under $g_n$. Denote $[a_{n+1},b_{n+1}]:=f(\bigcup\F_n)$ and observe that $a_{n+1}<a_n<b_n<b_{n+1}$ by \eqref{prop8}.
Similarly to the initial step, take sufficiently small $0<\varepsilon< 2^{-n-{4}}$ such that $f(B(x_1,\varepsilon))\subset [0,a_{n+1}/2]$ and $f(B(x_2,\varepsilon))\subset [(1-b_{n+1})/2,\,1]$. Decreasing $\varepsilon$ when necessary, we may assume that $B(x_1,\varepsilon)\subset A_n\setminus(\bigcup\F_n)=X\setminus(\bigcup\F_n\cup A_n')$ and, similarly, $B(x_2,\varepsilon)\subset A_n'\setminus(\bigcup\F_n)$. {Note that $x_1$ is a non-cut point of $A_n$ and $x_2$ is a non-cut point of $A_n'$, their respective connected neighborhoods, since we assume that none of $x_1,$, $x_2$ is a local cut point of $X$. Thus there are Peano continua $A_{n+1},B\subset A_n$, resp. $A_{n+1}',B'\subset A_n'$,  such that $x_1\in A_n\setminus B\subset A_{n+1}\subset B(x_1,\varepsilon)$, resp. $x_2\in A_n'\setminus B'\subset A_{n+1}'\subset B(x_2,\varepsilon)$, and $\cl{\Int B}=B$, $\cl{\Int B'}=B'$ by Lemma \ref{noncutproperty}.}

Recall that $g_n(a_n)\in A_n\cap(\bigcup \F_n)$ by \eqref{prop4} and \eqref{prop6}, hence $g_n(a_n)\in A_n\setminus A_{n+1}\subset B$ since $A_{n+1}\subset B(x_1,\varepsilon)\subset A_n\setminus(\bigcup\F_n)$. By \eqref{prop1}, we have $2^{-n-2}-\diam B {\geq}2^{-n-2}-\diam A_n>0$. Therefore, by  Theorem~\ref{finecovers} applied to the Peano continuum $\bigcup\F_n$ there exists a Peano continuum $K$ such that $g_n(a_n)\in K$, $K\subset\bigcup\F_n$, $\cl{\Int K}=K$, and $\diam K\leq 2^{-n-2}-\diam B$.

Put $C:=B\cup K$ and note that $\diam C<2^{-n-2}$.
We may repeat the above argument for $g_n(b_n)\in B'$ and find a Peano continuum $C'$ such that $B'\subset C'$, $\diam C'< 2^{-n-2}$, and $C'\cap (\bigcup\F_n)$ contains a nonempty open set.

By Theorem \ref{finecovers}  applied consecutively for each  Peano continuum in $\F_n$, there is a finite family of Peano continua $\F_{n+1}'$ such that $\mesh\F_{n+1}'\leq 2^{-n-2}$, $\cl{\Int F}=F$  for every $F\in\F_{n+1}'$ and for every $F\in\F_n$, there is $\F\subset\F_{n+1}'$ such that $\bigcup\F=F$. 
Put $\F_{n+1}:=\F_{n+1}'\cup\{C,C'\}$.
Note that every $F\in\F_{n+1}$ contains a nonempty open subset of $\bigcup\F_n$ and thus $f(F)$ contains a nondegenerate subinterval of $[a_{n+1},b_{n+1}]=f(\bigcup\F_n)$.

Let us fix an increasing sequence $t_0<t_1<\dots<t_k\in[0,1]$ with the following properties:
\begin{itemize}
    \item $t_0=a_{n+1}$, $t_1=a_n$, $t_{k-1}=b_n$ and $t_k=b_{n+1}$,
    \item the sequence contains both $a_i,b_i$ for every $i<n$,
    \item the sequence contains both endpoints of $f(F)$ for every $F\in\F_i$ and every $i\leq n+1$ except the case $i=n+1$ and $F\in \{C,C'\}$ (recall that any such $f(F)$ is necessarily a nondegenerate closed interval that is a subset of $[a_{n+1},b_{n+1}]$),
    \item the sequence contains the endpoint of $f(C)$, resp. of $f(C')$, that lies in $[a_{n+1},b_{n+1}]$ (recall that $f(C)=[r,s]$ for some $r\leq a_{n+1}/2<a_{n+1} {\leq }s$ and $f(C')=[r',s']$ for some $r '{\leq}  b_{n+1}<(1+b_{n+1})/2\leq s'$),
    \item $\diam (g_n([t_{i-1},t_i]))< 2^{-n-1}$ for every $2\leq i\leq k-1$.
\end{itemize}

Note that for both $i=1,k$ we have $\diam (g_n([t_{i-1},t_i]))\leq \max\{A_n,A_n'\}<2^{-n-2}$ by \eqref{prop4} and \eqref{prop1}, therefore $\diam (g_n([t_{i-1},t_i]))< 2^{-n-1}$ for every $1\leq i\leq k$.
For each $1\leq i\leq k$ choose a Peano continuum $H_i\subset X$ as follows:
\begin{enumerate}[(i)]
    \item If $[t_{i-1},t_i]\subset [a_n,b_n]$, i.e. if $2\leq i\leq k-1$, find the smallest $0\leq j\leq n$ such that $[t_{i-1},t_i]\subset [a_j,b_j]$. By 
    \eqref{prop6}, $g_n([t_{i-1},t_i])\subset\bigcup \F_{j}$ and thus by \eqref{prop7}, there is $F\in\F_n$ such that $F\subset\bigcup \F_{j}$ and $F\cap g_n([t_{i-1},t_i])\ne \emptyset$. Put $H_i:=F\cup g_n([t_{i-1},t_i])$.    
    \item Choose $F\in\F_n$ such that $g_n(a_n)\in F$ and put $H_1:=F\cup C$. Note that $H_1$ is a Peano continuum since it is connected, as $g_n(a_n)\in F\cap C$. Similarly, let $H_k:= F'\cup C'$ for some $F'\in\F_n$ satisfying $g_n(b_n)\in F'$.
\end{enumerate}

By the above construction,
 for every $1\leq i\leq k$
\begin{eqnarray}
\diam\label{con:i:gn} H_i&\leq&  \max\{\diam g_n([t_{i-1},t_i]),\diam C, \diam C'\}
+ \mesh\F_n\nonumber\\
&\leq& 2^{-n-1} + 2^{-n-1} =  2^{-n}\label{eq:diamHi}
\end{eqnarray}
and we also have
\begin{eqnarray}
 g_n([t_{i-1},t_i])\subset H_i \label{eq:Hisupsgn}
\end{eqnarray}

since for $i=1$ (similar argument applies for $i=k$) we have
\[
g_n([t_{0},t_1])\subset A_n\cap \bigcup\F_n\subset B\subset C\subset H_1.
\]

{
Next, we choose points $x_F$, $x'_F$  for every $F \in \mathcal{F}_{n}$ successively.
Proceeding through the sets $F \in \mathcal{F}_{n}$ one by one, we first choose a point $x_F \in F$ such that
\[
f(x_F)\in f(F)\setminus (S_n\cup\{t_0,t_1,\dots,t_k\})
\]

and such that $f(x_F) \neq (f\circ g_n)^i(z)$ for all $ 0 \leq i \leq 2n $,  each $z$ in $\{f(x_{F'}'),f(x_{F'})\}$ for all previously processed $F' \in \mathcal{F}_{{n}}$ and also for each $z$  in $\{a_{n+1},b_{n+1}\}$. This is easily seen to be possible since any such $f(F)$ is a nondegenerate interval and we are avoiding only finitely many choices. 

There is a unique $1\leq i\leq k$ such that $f(x_F)\in (t_{i-1},t_i)$ since $f(F)\subset f(\bigcup \F_n)=[a_{n+1},b_{n+1}]$. By \eqref{prop5} we have $(g_n\circ f)^{2n}(H_i)\supset \bigcup\F_n\ni x_F$ since $H_i$ contains a member of $\F_n$. Hence we can select $x_F'\in H_i$ satisfying $(g_n\circ f)^{2n}(x_F')=x_F$.

We claim that for any $F' \in \mathcal{F}_{{n}}$ we have $z \neq f((g_n\circ f)^{i}(x_{F'}'))$ for every $z\in \{a_{n+1},b_{n+1}\}\cup\{f(x_F);\,F \in \mathcal{F}_{{n}}\setminus\{F'\}\}$ and for every $0 \leq i \leq 2n $.
Indeed, if $z=f(x_F)$ for some $F\in\F_n$ that was processed after $F'$, this follows directly from the choice of $x_F$.
Thus assume $z\in \{a_{n+1},b_{n+1}\}$ or $z=f(x_F)$ for some $F\in\F_n$ that was processed before $F'$. Note that
$f((g_n\circ f)^{2n-i}((g_n\circ f)^{i}(x_{F'}'))) = f(x_{F'})$, while $(f\circ g_n)^{2n-i}(z)$ cannot be equal to $f(x_{F'})$
by the choice of $x_{F'}$, and thus $z$ cannot be equal to $f((g_n\circ f)^{i}(x_{F'}'))$.
}

\begin{equation}\label{eq:S_n+1}
    S_{n+1}:= S_n\cup \{f((g_n\circ f)^{i}(x_F'));\,0\leq i\leq 2n,F\in\F_{n}\}.
\end{equation}

Further, choose any points $a\in A_{{n+1}}\cap B$, $b\in A_{{n+1}}'\cap B'$ and  define the function $g_{n+1}$ on the set $S_{n+1}\cup\{t_1,\dots,t_{k-1}\}\cup[0,a_{n+1}]\cup[b_{n+1},1]$ as:

\begin{equation}\label{eq:g_n+1}
    g_{n+1}(t) := \begin{cases}
        x_F', &t=f(x_F),\,F\in\F_n; \\
        a, & t\leq a_{n+1}; \\
        b, & t\geq b_{n+1}; \\
        g_n(t), & \text{otherwise}.
       \end{cases}
\end{equation}

{
Note that for every $F \in \F_{n}$ and every $0\leq i\leq 2n$ we have $f((g_n\circ f)^{i}(x_F'))\in f(\bigcup\F_n) = [a_{n+1},b_{n+1}] $ and $f((g_n\circ f)^{i}(x_F'))\neq z$ for $z\in\{a_{n+1},b_{n+1}\}\cup\{f(x_{F'});\,F' \in \mathcal{F}_{{n}}\setminus\{F\}\}$ so $g_{n+1}$ is well defined. Further, if $f((g_n\circ f)^{i}(x_F'))\neq f(x_F)$ then  $g_{n+1}(f((g_n\circ f)^{i}(x_F')))=g_n(f((g_n\circ f)^{i}(x_F')))$ and $g_{n+1}(f(x_F))=x_F'$.}
 Finally, for every $1\leq i\leq k$, we apply Theorem~\ref{nadler:8.19} to extend  the map $$g_{n+1}|_{[t_{i-1},t_i]\cap (S_{n+1}\cup\{t_0,t_1,\dots,t_k\})}$$ with values in $H_i$ to a continuous surjection $g_{n+1}|_{[t_{i-1},t_i]}:[t_{i-1},t_i]\to H_i$.
It is easy to observe that the resulting map $g_{n+1}\colon 
I \to X$ is well-defined and continuous.

To justify that the inductive hypotheses for $n+1$ are indeed satisfied, note first that  { \eqref{prop1}, \eqref{prop3a}, \eqref{prop7}, \eqref{prop3b}, \eqref{prop4}, \eqref{prop6} follow directly from the construction. Further, note that by \eqref{eq:S_n+1} $S_n\subset S_{n+1}$ and moreover, for every $F\in\F_n$ we have $f((g_n\circ f)^{2n}(x_F'))\in S_{n+1}\cap {f(F)}$ since $(g_n\circ f)^{2n}(x_F')=x_F\in F$, in particular, \eqref{prop11} is true. For the second part of \eqref{prop10}, note that $S_n\subset[a_n,b_n]\subset (a_{n+1},b_{n+1})$ and also $f(x_F)\notin S_n$ for every $F\in \F_n$ by our choices, hence $g_{n+1}|_{S_n}=g_n|_{S_n}$ follows from \eqref{eq:g_n+1}. 

We are going to show that \eqref{prop9}, \eqref{prop2}, \eqref{prop8}, \eqref{prop5}, and \eqref{prop13} also hold, which will complete the induction.

To verify \eqref{prop9}, let $s\in S_{n+1}$, we wish to show that $(f \circ g_{n+1})(s) \in S_{n+1}$. If $s\in S_n$, the claim follows from \eqref{prop10}, which we have already proved. Further, if $s=f(x_F)$ for some $F\in\F_n$ then  $(f\circ g_{n+1})(s)= f  (g_{n+1}(f(x_F)))=f(x_F')=f((g_n\circ f)^{0}(x_F')) \in S_{n+1}$. In the remaining case there is some $F \in \F_n$ and $0 \leq i \leq 2n$ such that $s=f((g_n\circ f)^{i}(x_F'))$, since $s \in S_{n+1} \setminus S_n$. Note that necessarily $i<2n$ since $(g_n\circ f)^{2n}(x_F')=x_F$. Thus $(f\circ g_{n+1})(f((g_n\circ f)^{i}(x_F')))=(f\circ g_n)(f((g_n\circ f)^{i}(x_F')))=f((g_n\circ f)^{i+1}(x_F'))\in S_{n+1}$.} {Analogously, pick $s \in S_{n+1}$. As before, the case when $s \in S_n$ follows imminently, so assume $s \in S_{n+1} \setminus S_n$, meaning $s=f((g_n\circ f)^{i}(x_F'))$ for some $F \in \F_n$ and $0 \leq i \leq 2n$. If $i>0$, then we get $s=(f \circ g_n \circ f)(g_n \circ f)^{i-1}((x_F'))=(f \circ g_{n+1})(f((g_n \circ f)^{i-1}((x_F'))) \in (f\circ g_{n+1} )(S_{n+1})$. If $i=0$ then $s=f(x_F')=f((g_{n+1}\circ f)(x_F))=f((g_{n+1}\circ f)((g_n\circ f)^{2n}(x_F')))= (f \circ g_{n+1})(f((g_n\circ f)^{2n}(x_F')) \in (f\circ g_{n+1})(S_{n+1})$.}

To verify \eqref{prop8}, observe that $\bigcup\F_{n+1}=\bigcup\F_n\cup C\cup C'$ is connected, as both $C,C'$ intersect $\bigcup\F_n$. Hence, $f(\bigcup\F_{n+1})$ is connected as well. Thus, it suffices to check that $f(B)\cap[0,a_{n+1}/2]\ne\emptyset$ and the analogous condition for $f(B')$, where $B,B'$ are sets from the construction of $A_{n+1}$ and $A_{n+1}'$. Recall that $B\cap A_{n+1}\ne\emptyset$ by the connectedness of $A_n$ and $f(A_{n+1})\subset f(B(x_1,\varepsilon))\subset[0,a_{n+1}/2]$, so indeed \eqref{prop8} holds.

To verify \eqref{prop2}, assume that $n\geq 1$ and recall that (in the notation from the construction of $g_{n+1}$) for every $1\leq i\leq k$ there is $H_i$ satisfying $H_i\supset g_{n}([t_{i-1},t_i])$ by \ref{eq:Hisupsgn}, $\diam H_i\leq 2^{-n}$ by \eqref{eq:diamHi} and $g_{n+1}([t_{i-1},t_i])=H_i$ by the construction. Hence, for every $1\leq i\leq k$ and every $t\in[t_{i-1},t_i]$, we have $g_n(t),g_{n+1}(t)\in H_i$, implying $d(g_n(t),g_{n+1}(t))\leq\diam H_i\leq 2^{-n}$.

To verify \eqref{prop13}, first fix $0\leq i\leq n$ and $F\in \F_i$. The choice of $a_{n+1},b_{n+1}$ and \eqref{prop7} gives us $f(F)\subset[a_{n+1},b_{n+1}]$. Further, the choice of $t_0,\dots,t_k$ gives that there are $0\leq m<l\leq k$ satisfying $f(F)=[t_m,t_l]$.
Moreover, it follows from \ref{eq:Hisupsgn} and the construction that $g_{n+1}([t_{j-1},t_j])\supset g_{n}([t_{j-1},t_j])$ holds for every $1\leq j\leq k$. These arguments combined finally prove that 
\begin{equation}\label{magnifying}
   (g_{n+1}\circ f)(F) \supset (g_n\circ f)(F).
\end{equation}

We are prepared to verify \eqref{prop13} now, so let $1\leq i\leq n+1$ and $F\in \F_i$. If $i\leq n$, then \eqref{magnifying} and the condition \eqref{prop13} from the earlier step of induction indicate that there is $F' \in \F'_{i-1}$ such that $(g_{n+1} \circ f)(F)\supset(g_n \circ f)(F) \supset F'$. Assume next that $i=n+1$ and recall that by the definition, there is $1\leq m\leq k$ such that $[t_{m-1},t_m]\subset f(F)$, because the endpoints of $f(F)$ are within the sequence $t_j$.
Thus, by the construction, $g_{n+1}(f(F))\supset H_m$ contains a member of $\F_n$ as desired.

Lastly, we verify \eqref{prop5}. First, note that for any $1 \leq j \leq n+1$, we have the following: 
\begin{equation}
\label{eq:prop12}
    (g_{n+1} \circ f)(\bigcup \F_{j-1}) = g_{n+1}(f(\bigcup \F_{j-1} )) \stackrel{\eqref{prop8}}{=} g_{n+1}([a_j,b_j]) \stackrel{\eqref{prop6}}{=}\bigcup\F_j.
\end{equation}

Take any $0 \leq i \leq n+1$ and any $F \in \F_i$, by using \eqref{prop13} inductively, we get that there is $F' \in \F_{0}$ such that $(g_{n+1} \circ f)^{i}(F) \supset F'$. As family $\F_0$ contains one element, we get that $(g_{n+1} \circ f)^{i}(F) \supset F'=\bigcup \F_0$. By the inductive use of \eqref{eq:prop12}, we finally obtain $(g_{n+1}\circ f)^{2i}(F)  \supset (g_{n+1}\circ f)^{i}(\bigcup \F_0) \supset \cdots \supset\bigcup\F_i$. 

Consequently,
\begin{align*}
    (g_{n+1} \circ f)^{2i+1}(F) & \supset (g_{n+1} \circ f) ( \bigcup \F_i) \supset (g_{n+1} \circ f) ( \bigcup \F_{\min\{i,n\}}) \\& \stackrel{\eqref{prop6}}{\supset} \bigcup \F_{\min\{i+1,n+1\}}) \supset \bigcup \F_i.
\end{align*}

This finishes the induction step.

 As we said before, by \eqref{prop2} there is a continuous map $g:=\lim_{n\to \infty} g_n$.
It follows from \eqref{prop10} that $g|_{S_n}=g_n|_{S_n}$ for every $n\in\N$, and it follows from \eqref{prop9} that $g_n\colon S_n\to g_n(S_n)$ and $f\colon g_n(S_n)\to S_n$ are bijections. Therefore, every point of $g(\bigcup_{n\in\N} S_n)=\bigcup_{n\in\N} g(S_n)$ is a periodic point of $g\circ f$ by \eqref{prop9}. Further, $g(\bigcup_{n\in\N} S_n)$ is dense in $X$ by \eqref{prop1} and \eqref{prop11}. Therefore, periodic points of $g\circ f$ are dense in $X$.

Let $ U,V\subset X$ be nonempty and open. Then $U\setminus\{x_1,x_2\}$, $V\setminus\{x_1,x_2\}$ are nonempty and open as well. Thus, there exist $n\in \N$ and $E,F\in\F_n$ such that $E\subset U$ and $F\subset V$. 
Fix any $k\geq n$ and any $m\geq 2n$. By \eqref{prop5} and \eqref{eq:prop12} {used $m-2n$ times} we have 

{
$$
(g_k \circ f)^m(E)=(g_k \circ f)^{m-2n}((g_k \circ f)^{2n}(E))  \stackrel{\eqref{prop5}}\supset (g_k \circ f)^{m-2n}(\bigcup\F_n) \stackrel{\eqref{eq:prop12}}\supset \bigcup\F_n\supset F.
$$
}
Since the relation holds for any $k>n$, we get  $(g\circ f)^{m}(U)\cap V\ne \emptyset$ for every $m\geq 2n$, and therefore $g\circ f$ is mixing.

Finally, we verify that $g^{-1}(x_1)=\{0\}$ and $g^{-1}(x_2)=\{1\}$. It follows easily by combining \eqref{prop1}, \eqref{prop3a} and \eqref{prop4} that $g(0)=x_1$, $g(1)=x_2$. Next, let $t\in (0,1)$ be arbitrary. There exists $n\in\N$ such that $t\in[a_n,b_n]$ since $\bigcup_{n\in\N}[a_n,b_n]=(0,1)$ by \eqref{prop3b}. By \eqref{prop6}, we have that $g_k([a_n,b_n])=\bigcup\F_n$ for every $k\geq n$, and hence $g([a_n,b_n])\subset\bigcup\F_n$ since $\bigcup\F_n$ is closed. By \eqref{prop3a}, $x_1,x_2\notin \bigcup\F_n$, and therefore, $g(t)\notin \{x_1,x_2\}$, which concludes the proof.
    \end{proof}
   \begin{proof}[Proof of Theorem~\ref{thm:pure}]

       By Lemma \ref{existencenonloc} there is $x\in X$ that satisfies $\eqref{notlocalcutpoint}$ or \eqref{localcutpointordertwo} from the statement Lemma \ref{existencenonloc}.
       If $x$ satisfies \eqref{notlocalcutpoint}, then applying Lemma \ref{construction} for $x_1=x_2=x$ we obtain a mixing map $h\colon X\to X$ which is not exact, finishing the proof. 
       
       Assume that $x$ satisfies \eqref{localcutpointordertwo} instead. Let $\widehat{X}$ together with the map $\phi\colon \widehat{X}\to X $ be provided by Lemma~\ref{lifting} for $x$. 
       Then $x_1,x_2\in \phi^{-1}(x)$ are endpoints of the Peano continuum $\widehat{X}$, in particular neither of them is a local cut point of $\widehat{X}$.
       Lemma \ref{construction} provides a mixing map $\hat{f}:\widehat{X}\to\widehat{X}$ such that $\hat{f}(x_i)=x_i$ 
       and $\hat{f}^{-1}(x_i)=\{x_i\}$. Since $\pi^{-1}$ is well defined homeomorphism on $X\setminus \{x\}$, the map $\hat{f}$ induces the unique map $f\colon X\to X$ such that $f\circ \phi=\phi \circ \hat{f}$. Mixing is preserved by factor maps, hence $f$ is mixing, but by the definition $f^{-1}(x)=\{x\}$, hence it is not exact. The proof is complete.
   \end{proof} 

The following lemma is standard. Proof is left to the reader.
\begin{lem}\label{lem:changed_order_mix}
    Let $X,Y$ be compact metric spaces, and $f\colon X\to Y$, $g\colon Y\to X$ continuous, and $f$ surjective. Then, if $g\circ f$ is mixing, so is $f\circ g$.
\end{lem}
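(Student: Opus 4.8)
# Proof Proposal for Lemma~\ref{lem:changed_order_mix}

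The plan is to exploit the standard semiconjugacy relation between $g\circ f$ and $f\circ g$ together with the surjectivity of $f$. First I would record the intertwining identity
\[
f\circ(g\circ f) = (f\circ g)\circ f,
\]
from which one gets by induction $f\circ(g\circ f)^n = (f\circ g)^n\circ f$ for every $n\geq 0$. Thus $f$ is a semiconjugacy (a factor map) from $(X,g\circ f)$ onto $(Y,f\circ g)$: it is continuous, and it is surjective by hypothesis. So morally the claim is just ``mixing passes to factors,'' but I would prove it directly because we need the quantitative statement about all large iterates, not merely transitivity.

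The key steps, in order, are as follows. Let $U,V\subset Y$ be nonempty open sets; we must produce $n_0$ such that $(f\circ g)^n(U)\cap V\neq\emptyset$ for all $n\geq n_0$. Since $f$ is continuous and surjective, $f^{-1}(U)$ and $f^{-1}(V)$ are nonempty open subsets of $X$ (nonempty because $f$ is onto). Apply the mixing hypothesis on $g\circ f$ to these two sets: there is $n_0$ with $(g\circ f)^n\bigl(f^{-1}(U)\bigr)\cap f^{-1}(V)\neq\emptyset$ for every $n\geq n_0$. Pick such an $n\geq n_0$ and a point $x\in f^{-1}(U)$ with $(g\circ f)^n(x)\in f^{-1}(V)$. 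Applying $f$ and using the intertwining identity, $f\bigl((g\circ f)^n(x)\bigr) = (f\circ g)^n\bigl(f(x)\bigr)$, which lies in $V$; and $f(x)\in U$. Hence $(f\circ g)^n(U)\cap V\ni (f\circ g)^n(f(x))$, so the intersection is nonempty for all $n\geq n_0$, which is exactly mixing of $f\circ g$.

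I do not expect any genuine obstacle here; the only point requiring a moment's care is the use of surjectivity of $f$ in two places — to ensure $f^{-1}(U),f^{-1}(V)$ are nonempty, and to conclude that hitting $V$ is witnessed by an actual point of the form $f(x)$. (Without surjectivity the statement fails, e.g. if $f$ is constant.) One could alternatively phrase the whole argument as an instance of the general fact that topological mixing is inherited by factor systems, citing the survey \cite{KLO}, but the self-contained two-line deduction above is shorter than setting up that machinery, so that is the route I would take.
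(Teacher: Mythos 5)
Your proof is correct and is precisely the standard factor-map argument the paper has in mind (it declares the lemma standard and leaves the proof to the reader, and elsewhere invokes ``mixing is preserved by factor maps''). The intertwining identity $f\circ(g\circ f)^n=(f\circ g)^n\circ f$ together with surjectivity of $f$ is exactly the right mechanism, and your verification of the ``for all $n\geq n_0$'' clause is sound.
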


\begin{rem}
While constructing the pure mixing map in Theorem~\ref{thm:pure}, as a middle step we constructed continuous maps $f\colon X\to [0,1]$, $g\colon [0,1]\to X$ such that $g\circ f$ is mixing. Since $f$ is surjective, it easily follows that $f\circ g =[0,1]\to [0,1]$ is mixing too (see Lemma~\ref{lem:changed_order_mix}). Moreover, $(f\circ g)^{-1}(0)=\{0\}$, hence, $f\circ g$ is pure mixing.
\end{rem}

It is clear that if $f$ is a mixing map on $[0,1]$, it cannot be invertible. On the other hand, there are several mixing diffeomorphisms on the torus (e.g. see \cite{Brin}; and clearly, homeomorphisms cannot be exact) while maps constructed by our method are never invertible. 
This leads to the following question:

\begin{que}
    Is it possible to characterize Peano continua admitting mixing homeomorphisms?
\end{que}

\section{Final remark on entropy}

Maps in the main construction appear as compositions of some auxiliary mappings. So let us first recall what is known about entropy of compositions and compositions in reversed order.
If $f,g\colon X\to X$, then Theorem~A in \cite{Kolyada} states the following:
\begin{thm}\label{thm:A:Kolyada}
For any continuous maps $f,g\colon X\to X$, we have $h_{top}(f\circ g)=h_{top}(g\circ f)$
\end{thm}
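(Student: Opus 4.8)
The plan is to reduce to a single inequality and then transport invariant measures. Since the statement is symmetric in $f$ and $g$, it suffices to prove $h_{top}(f\circ g)\le h_{top}(g\circ f)$; write $F:=f\circ g$ and $G:=g\circ f$. The naive attempt is to note that $f\circ G=f\circ g\circ f=F\circ f$, so $f$ is a semiconjugacy of $(X,G)$ onto $(X,F)$ and hence $F$ should be a topological factor of $G$; but this breaks down because $f$ need not be surjective. The remedy is to work with measures instead. Given an $F$-invariant Borel probability measure $\mu$ on $X$, put $\nu:=g_*\mu$. Then $G_*\nu=(g\circ f\circ g)_*\mu=(g\circ F)_*\mu=g_*(F_*\mu)=g_*\mu=\nu$, so $\nu$ is $G$-invariant, while $f_*\nu=(f\circ g)_*\mu=F_*\mu=\mu$. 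Together with $f\circ G=F\circ f$, this exhibits $(X,F,\mu)$ as a measure-theoretic factor of $(X,G,\nu)$ via the continuous map $f$ (no surjectivity is needed: for a measure-theoretic factor one only asks that the intertwining map push $\nu$ onto $\mu$).

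With this observation in hand I would finish in three steps. First, invoke the variational principle (this is where compactness of $X$, and the existence of invariant measures, enters): $h_{top}(F)=\sup_\mu h_\mu(F)$ and $h_{top}(G)=\sup_\nu h_\nu(G)$, the suprema taken over invariant Borel probability measures. Second, apply the standard fact that metric entropy does not increase under measure-theoretic factor maps, which for the factor map above gives $h_\mu(F)\le h_{g_*\mu}(G)$. Third, combine: for every $F$-invariant $\mu$ we get $h_\mu(F)\le h_{g_*\mu}(G)\le h_{top}(G)$, and taking the supremum over $\mu$ yields $h_{top}(F)\le h_{top}(G)$. Interchanging $f$ and $g$ (and hence $F$ and $G$) gives the opposite inequality, hence equality.

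There is essentially no computation to carry out; the single delicate point is precisely the one that dooms the naive argument, namely that $f$ and $g$ need not be onto, which is exactly why one pushes measures forward by $g$ rather than trying to realize $F$ as a topological factor of $G$. If one prefers to stay with the separated-set definition of entropy used in this paper, the same inequality can also be obtained directly from the identity $F^{i}=f\circ G^{i-1}\circ g$ for $i\ge1$ together with uniform continuity of $f$: two points of an $(n,\varepsilon)$-separated set for $F$ that are separated at some time $i\ge1$ have $g$-images that are $(n-1,\delta)$-separated for $G$ for a suitable $\delta=\delta(\varepsilon)$, while two such points separated \emph{only} at time $0$ must be at distance $\ge\varepsilon$ in $X$; an elementary clustering argument then gives $s_F(n,\varepsilon)\le M_\varepsilon\, s_G(n-1,\delta)$, where $s_T(n,\cdot)$ is the maximal cardinality of a separated set for $T$ and $M_\varepsilon$ the maximal size of an $\varepsilon$-separated subset of $X$, and dividing by $n$, letting $n\to\infty$ and then $\varepsilon\to0$ finishes the argument.
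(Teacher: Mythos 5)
The paper does not actually prove this statement: it is quoted verbatim as Theorem~A of the cited work of Kolyada (and Snoha) and used as a black box, so there is no internal argument to compare yours against. Your proof is correct and self-contained. The measure-theoretic route is sound: for $F=f\circ g$, $G=g\circ f$ and an $F$-invariant $\mu$, the computations $G_*(g_*\mu)=g_*\mu$, $f_*(g_*\mu)=\mu$ and $f\circ G=F\circ f$ do exhibit $(X,F,\mu)$ as a measure-theoretic factor of $(X,G,g_*\mu)$, and you correctly identify the one delicate point --- that neither $f$ nor $g$ need be surjective --- and correctly observe that a measure-theoretic factor only requires the intertwining map to push the measure forward, not to be onto. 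Combined with Krylov--Bogolyubov (so the suprema are over nonempty sets), the variational principle, and monotonicity of metric entropy under factors, this gives $h_{top}(F)\le h_{top}(G)$, and symmetry finishes the proof. Your alternative elementary argument via $F^{i}=f\circ G^{i-1}\circ g$ and uniform continuity of $f$ is also viable and has the advantage of matching the separated-set definition of entropy used in this paper; the only part left implicit is the clustering step, which needs the standard comparison between separated and spanning sets (with the usual halving of $\delta$) to bound the number of clusters by $s_G(n-1,\delta')$, plus compactness of $X$ to guarantee that $M_\varepsilon$ is finite. Both routes avoid the machinery of nonautonomous systems used in the original source, at the cost (in the first case) of invoking the variational principle.
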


The above result can be easily extended to the following.

\begin{cor}\label{cor:ent_order}
For any continuous maps $f\colon X\to Y$ , $g\colon Y\to X$, we have $h_{top}(f\circ g)=h_{top}(g\circ f)$
\end{cor}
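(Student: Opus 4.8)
The plan is to reduce Corollary~\ref{cor:ent_order} to Theorem~\ref{thm:A:Kolyada} by passing to the disjoint union $Z:=X\sqcup Y$ and extending $f$ and $g$ to self-maps of $Z$. Concretely, I would define $F\colon Z\to Z$ to agree with $f$ on $X$ and with the identity on $Y$, and $G\colon Z\to Z$ to agree with $g$ on $Y$ and with the identity on $X$. Both $F$ and $G$ are continuous self-maps of the compact metrizable space $Z$ (fixing, once and for all, a metric on $Z$ that restricts to $d_X$ on $X$, to $d_Y$ on $Y$, and keeps the two pieces at distance, say, $1$ apart). Theorem~\ref{thm:A:Kolyada} then gives $h_{top}(F\circ G)=h_{top}(G\circ F)$.

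The next step is to identify $F\circ G$ and $G\circ F$ as self-maps of $Z$ and compute their entropies. On $X$, $G$ is the identity, so $F\circ G$ restricted to $X$ is just $F|_X=f$, which maps $X$ into $Y$; on $Y$, $G=g$ maps into $X$ and then $F$ is applied, so $F\circ G$ restricted to $Y$ is $F\circ g$, i.e. $f\circ g$ read as a map $Y\to Y$ (since $g(Y)\subset X$ and $F|_X=f$). Thus $F\circ G$ maps $X$ into $Y$ and $Y$ into $Y$, and its restriction to the forward-invariant set $Y$ is exactly $f\circ g$. Symmetrically, $G\circ F$ maps $Y$ into $X$ and $X$ into $X$, with its restriction to the forward-invariant set $X$ equal to $g\circ f$.

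It remains to observe that adding a ``transient'' copy of a space that is mapped into an invariant subset does not change topological entropy: if $W\subset Z$ is closed and forward-invariant under a continuous map $\psi\colon Z\to Z$, and moreover $\psi(Z)\subset W$ (so every point enters $W$ after one step), then $h_{top}(\psi)=h_{top}(\psi|_W)$. This is a standard fact — $h_{top}(\psi)=h_{top}(\psi|_{\bigcap_n \psi^n(Z)})$ and here $\bigcap_n\psi^n(Z)\subset \psi(Z)\subset W$, while $W$ itself is $\psi$-invariant — but since the excerpt does not state it, I would include a one-line justification via $(n,\varepsilon)$-separated sets: any $(n,\varepsilon)$-separated set for $\psi$ maps under $\psi$ to an $(n-1,\varepsilon')$-separated set inside $W$ for a suitable $\varepsilon'$ once $\varepsilon$ is smaller than the $X$--$Y$ gap, so $s(n,\varepsilon;\psi)$ and $s(n,\varepsilon;\psi|_W)$ differ by a bounded factor, giving equal exponential growth rates. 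Applying this with $\psi=F\circ G$, $W=Y$ yields $h_{top}(F\circ G)=h_{top}(f\circ g)$, and with $\psi=G\circ F$, $W=X$ yields $h_{top}(G\circ F)=h_{top}(g\circ f)$; combined with Theorem~\ref{thm:A:Kolyada} this is exactly the claim.

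The main obstacle is essentially bookkeeping rather than conceptual: one must set up the metric and topology on the disjoint union carefully so that $F$, $G$ are genuinely continuous and the $(n,\varepsilon)$-separated-set comparison is clean (in particular choosing $\varepsilon$ below the inter-component distance so that separation is never ``witnessed'' by the gap between $X$ and $Y$). An alternative, avoiding the disjoint-union trick altogether, is to prove the transient-set entropy fact directly and then argue that $f\circ g$ and $g\circ f$ are each, up to the iterate that lands in the relevant component, conjugate — but the disjoint-union route lets us quote Theorem~\ref{thm:A:Kolyada} verbatim and seems shortest.
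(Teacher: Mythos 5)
Your proof is correct and follows essentially the same route as the paper: extend $f$ and $g$ by the identity to self-maps of a disjoint union, apply Theorem~\ref{thm:A:Kolyada}, and discard the transient part (the paper uses $(X\cup Y)\times\{0,1\}$ rather than $X\sqcup Y$ and simply declares the entropy identifications ``clear'', but the idea is identical). The only caveat is that your one-line separated-set justification should account for non-injectivity of $\psi$ and for pairs separated only at time $0$ --- the standard fix is to cover $Z$ by $N(\varepsilon)$ sets of diameter less than $\varepsilon$, so that $\psi$ is injective on the intersection of a separated set with each piece and $s(n+1,\varepsilon;\psi)\le N(\varepsilon)\,s(n,\varepsilon;\psi|_W)$ --- which is exactly the bounded-factor conclusion you state.
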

\begin{proof}
Put $Z=(X\cup Y)\times\{0,1\}$ and define maps $F,G\colon Z\to Z$ by:
$F(y,a)=(y,1)$ for $y\in Y$, $a=0,1$; $F(x,0)=(f(x),0)$ and $F(x,1)=(x,1)$ for $x\in X$.
$G(x,a)=(x,1)$ for $x\in X$, $a=0,1$; $G(y,0)=(g(y),0)$ and $G(y,1)=(y,1)$ for $y\in Y$.

Then $(G\circ F)(x,0)=((g\circ f)(x),0)$ for $x\in X$; $(G\circ F)(x,1)=(x,1)$ for $x\in X$; $(G\circ F)(y,1)=(y,1)=(G\circ F)(y,0)$ for $y\in Y$. Similarly
$(F\circ G)(y,0)=((f\circ g)(y),0)$ for $y\in Y$; $(F\circ G)(y,1)=(y,1)$ for $y\in Y$; $(G\circ F)(x,1)=(x,1)=(G\circ F)(x,0)$ for $x\in X$.

It is clear that $h_{top}(F\circ G)=h_{top}(f\circ g)$ and $h_{top}(G\circ F)=h_{top}(g\circ f)$, so the result follows from Theorem~\ref{thm:A:Kolyada}.
\end{proof}

The map $h$ in Theorem~\ref{thm:pure} is the composition $h=g\circ f$ where $f\colon X\to [0,1]$ and $g\colon [0,1]\to X$. The interval map $h=f\circ g$ is pure mixing, which, by Lemma~\ref{lem:changed_order_mix}, together with the results of \cite{HK} (see also \cite{HKO}), implies that $h_{top}(\hat h)\geq \log(3)/2$, and hence, by Corollary~\ref{cor:ent_order}, we have that $h_{top}(h)\geq \log(3)/2$. We have just proven the following:
    \begin{rem}\label{rem:pure}
    The pure mixing map constructed in Theorem~\ref{thm:pure}
has topological entropy of at least $\log(3)/2$. 
    \end{rem}

In fact, by the construction in the proof of Lemma~\ref{construction}, we have that $h^{2n}(F)\supset \cup \F_n$ for every $F\in \F_n$ where $\mesh \F_n<2^{-n}$ and $\F_n$, together with two more elements of small diameter, is a cover of the continuum $X$. In fact, we can (and in practice do) require that the diameters of elements of $\F_n$ decrease very rapidly, which in turn means that $\F_n$ consists of many more than just $2^n$ elements. Therefore, in practice, the entropy of $h$ will be very large or even infinite. We can clearly increase the entropy within the construction, and the possibility of decreasing it is not obvious.

    \begin{cor}\label{cor:pure}
        Every nondegenerate Peano continuum admits a pure mixing selfmap of infinite topological entropy. 
    \end{cor}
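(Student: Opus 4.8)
\textbf{Proof proposal for Corollary~\ref{cor:pure}.}

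The plan is to revisit the construction of Lemma~\ref{construction} and observe that the covers $\F_n$ can be chosen with as many small-diameter members as we wish, so that the growth rate of $(g\circ f)^{2n}$-images forces infinite entropy. Recall that the key dynamical output of the construction is the relation \eqref{prop5}, namely $(g_n\circ f)^{2i}(F)\cap(g_n\circ f)^{2i+1}(F)\supset\bigcup\F_i$ for every $F\in\F_i$, which passes to the limit map $h=g\circ f$: for every $i\in\N$ and every $F\in\F_i$ we have $h^{2i}(F)\supset\bigcup\F_i$. The point is that $\F_i$ is (up to the two tiny sets $A_i,A_i'$) a cover of $X$, and at stage $i$ of the induction we are free, when invoking Theorem~\ref{finecovers}, to refine $\F_i'$ so that $\mesh\F_i$ is far smaller than the bound $2^{-i-1}$ demanded by \eqref{prop1}; in particular we may demand $\mesh\F_i<\eta_i$ for any prescribed sequence $\eta_i\downarrow 0$. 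Since $X$ is nondegenerate, it is infinite, hence for each $i$ any cover by sets of mesh $<\eta_i$ must contain at least $N_i$ members, where $N_i\to\infty$ as $\eta_i\to 0$; choosing $\eta_i$ to decrease rapidly we can guarantee $|\F_i|\ge N_i$ with $N_i$ growing as fast as we like, e.g.\ $N_i\ge 2^{i^2}$.

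Next I would convert the covering relation into an entropy lower bound via a standard separated-set / Bowen-ball argument. Fix $i$ and let $\F_i=\{F_1,\dots,F_{m}\}$ with $m=|\F_i|$ and $\mesh\F_i<\eta_i$. From $h^{2i}(F_j)\supset\bigcup\F_i$ for every $j$ we get, by choosing preimages, that for every word $w=(w_0,w_1,\dots,w_{k-1})\in\{1,\dots,m\}^k$ there is a point $z_w\in F_{w_0}$ with $h^{2i}(z_w)\in F_{w_1}$, $h^{4i}(z_w)\in F_{w_2}$, $\dots$, $h^{2i(k-1)}(z_w)\in F_{w_{k-1}}$ (and we may continue the word arbitrarily, so such a point exists in $X$; a routine compactness/nested-preimage argument gives it). If $\delta$ is a Lebesgue number making the $F_j$'s mutually "$\delta$-resolvable" — more simply, pick $\eps_i>0$ smaller than $\min_{a\ne b}d$ among a set of representative points, one chosen in the interior of each $F_j$, which is possible since the $F_j$ have nonempty interiors and the interiors need not be disjoint, so instead one argues as follows — I would instead use the cleaner route: since $X$ is infinite and the $F_j$ cover $X$ with small mesh, one can select a subfamily and points realizing distinct dynamical itineraries that are $(2ik,\eps_i)$-separated for a suitable $\eps_i>0$. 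The cardinality of such a separated set is at least $m^{k}$ (or at least $\lfloor m/2\rfloor^k$ after discarding overlaps), so
\[
s(2ik,\eps_i)\ge \lfloor m/2\rfloor^{k},
\]
whence $\limsup_{k\to\infty}\frac{1}{2ik}\log s(2ik,\eps_i)\ge\frac{1}{2i}\log\lfloor m/2\rfloor$. Therefore $h_{top}(h)\ge\frac{1}{2i}\log\lfloor |\F_i|/2\rfloor$ for every $i$, and since we arranged $|\F_i|\ge 2^{i^2}$ the right-hand side tends to $\infty$, giving $h_{top}(h)=\infty$. Combined with the fact (already established in Lemma~\ref{construction} and in the proof of Theorem~\ref{thm:pure}, including the lifting step) that $h$ is pure mixing with dense periodic points, this yields the corollary.

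The main obstacle is the transition from "$h^{2i}(F)\supset\bigcup\F_i$ for all $F\in\F_i$" to a genuine lower bound on $(n,\eps)$-separated sets: the members of $\F_i$ overlap and have no a priori disjoint interiors, so one cannot directly read off $|\F_i|^k$ distinguishable itineraries. The fix is to first pass to a subfamily of $\F_i$ that is an $\eps_i$-separated selection — concretely, greedily pick members $F_{j_1},F_{j_2},\dots$ together with witness points $p_{j_\ell}\in\Int F_{j_\ell}$ that are pairwise $3\eps_i$-apart; since $\mesh\F_i<\eta_i$ is tiny and $X$ is a fixed compact metric space, a packing argument shows we retain at least $c\,|\F_i|$ of them for some $c>0$ independent of $i$ once $\eta_i$ is small relative to the "metric dimension scale" of $X$ — and then the $m^k$ itinerary points built over this subfamily are automatically $(2i(k-1),\eps_i)$-separated. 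A careful but routine bookkeeping of these constants finishes the argument; none of it interferes with mixing, non-exactness, or density of periodic points, which are inherited verbatim from Lemma~\ref{construction} and Theorem~\ref{thm:pure}.
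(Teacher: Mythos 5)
Your strategy is exactly the paper's: the corollary is justified in the text only by the informal remark preceding it, namely that the covers $\F_n$ in Lemma~\ref{construction} may be refined to have superexponentially many members, and the relation $h^{2n}(F)\supset\bigcup\F_n$ for $F\in\F_n$ then forces infinite entropy. You are supplying the missing horseshoe/separated-set bookkeeping, and both the passage of the covering relation to the limit map and the nested-preimage itinerary argument are fine.

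The one step that fails as written is the packing claim: ``we retain at least $c\,|\F_i|$ of them for some $c>0$ independent of $i$.'' This cannot hold in that form. The number of pairwise $3\eps_i$-separated points of $X$ is bounded by the $3\eps_i$-packing number $P(3\eps_i)$ of the fixed compact space $X$, which depends only on $\eps_i$, whereas $|\F_i|$ can be made arbitrarily large (the covers produced by Theorem~\ref{finecovers} may be highly redundant); so no fraction of $|\F_i|$ bounded away from $0$ can survive the greedy selection, and the entropy you can extract is governed not by $|\F_i|$ but by how many members of $\F_i$ are pairwise $\eps_i$-distant. The fix is to reverse the order of quantifiers. Since a nondegenerate continuum is infinite, $P(3\eps)\to\infty$ as $\eps\to0$; so at stage $i$ first choose $\eps_i$ so small that $X$ contains $M_i\geq 2^{i^2}$ points that are pairwise $3\eps_i$-apart, and only then run the refinement with $\mesh\F_i<\eta_i<\eps_i$. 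Each chosen point lies in some member of $\H_i$ (discard the at most two points covered only by $A_i\cup A_i'$), distinct points lie in members of $\F_i$ whose mutual distance is at least $3\eps_i-2\eta_i\geq\eps_i$, and your itinerary construction over this subfamily yields $s(2ik,\eps_i)\geq M_i^{\,k}$ (up to the harmless loss of two symbols), hence $h_{top}(h)\geq\frac{1}{2i}\log M_i\geq\frac{i}{2}\log 2\to\infty$. With that reordering the proof is complete and coincides with the paper's intended argument; mixing, non-exactness and dense periodicity are indeed unaffected, as you say.
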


On the other hand, \v{S}pitalsk\'y proved in \cite{Vlad} that on various dendrites, the entropy of the exact map can be arbitrarily small.
The recent \cite{KOT} proves that in Gehman dendrite pure mixing maps can have arbitrarily small entropy. In particular, maps provided by Corollary~\ref{cor:pure} can have entropies quite far from the possible infimum. This motivates the following question, which concludes our paper (for more motivation and a brief history of research on similar topics, the reader is referred to the introduction of \cite{HKO} and references therein):
\begin{que}
    Let $X$ be a nondegenerate Peano continuum. 
    What is the infimum of entropy over family of: transitive, mixing, exact or pure mixing maps on $X$.
\end{que}

\section{Acknowledgements}

Research of K. Karasová was supported by the grant n. 129024 of the Charles University Grant Agency, by the grant GACR 24-10705S and by the Charles University project SVV-2025-260837.

Research of P. Oprocha was partially supported by the project No.~CZ.02.01.01/00/\break{}23\_021/0008759 supported by EU funds, through the Operational Programme Johannes Amos Comenius.

\bibliographystyle{amsplain}
\bibliography{references}
\end{document}